\documentclass[11pt]{amsart}
\usepackage{hyperref}
\hypersetup{nesting=true,debug=true,naturalnames=true}
\usepackage{graphicx,amssymb,upref}

\let\<\langle
\let\>\rangle

\let\uml\"
\usepackage[dvipsnames,usenames]{color}
\usepackage{float}
\usepackage{amsmath, amsthm, amssymb}
\usepackage{amsfonts}
\usepackage{enumerate}
\usepackage{epsf}
\usepackage{ifpdf}
\usepackage{relsize}
\usepackage{amsmath}
\usepackage[all]{xy}
\usepackage{graphicx}
\usepackage{epsfig}
\usepackage{epstopdf}
\usepackage[UKenglish]{babel}

\newtheorem{thm}{Theorem}[section]
\newtheorem{lemma}[thm]{Lemma}
\newtheorem{prop}[thm]{Proposition}
\newtheorem{construct}[thm]{Construction}
\newtheorem{cor}[thm]{Corollary}

\newtheorem*{claim}{Claim}

 \theoremstyle{definition}
\newtheorem{defn}[thm]{Definition}
\theoremstyle{plain}
\theoremstyle{remark}
\newtheorem{rmk}[thm]{Remark}

\numberwithin{equation}{section}
\numberwithin{enumi}{subsection}
\newcommand{\spin}{\ifmmode{\rm Spin}\else{${\rm spin}$\ }\fi}
\newcommand{\spinc}{\ifmmode{{\rm Spin}^c}\else{${\rm spin}^c$}\fi}

\newcommand{\D}{\mathcal{D}}

\makeatletter

\newcommand{\Rmnum}[1]{\expandafter\@slowromancap\romannumeral #1@}
\title{On L-space knots obtained from unknotting arcs in alternating diagrams}
\author[Andrew Donald]{Andrew Donald}
\address {School of Mathematics, University of Bristol, Bristol, BS8 1TW, UK}
\email{\href{mailto:andrew.donald@bristol.ac.uk}{andrew.donald@bristol.ac.uk}}

\author[Duncan McCoy]{Duncan McCoy}
\address {Department of Mathematics, University of Texas at Austin, Austin, TX 78712}
\email{\href{mailto:d.mccoy@math.utexas.edu}{d.mccoy@math.utexas.edu}}

\author[Faramarz Vafaee]{Faramarz Vafaee}
\address {Department of Mathematics, Duke University, Durham, NC 27708}
\email{\href{mailto:vafaee@math.duke.edu}{vafaee@math.duke.edu}}

\keywords{L-space, alternating diagram, unknotting crossing, branched double cover}

\subjclass[2010]{57M25, 57M27}

\begin{document}

\begin{abstract}
Let $D$ be a diagram of an alternating knot with unknotting number one. The branched double cover of $S^3$ branched over $D$ is an L-space obtained by half integral surgery on a knot $K_D$. We denote the set of all such knots $K_D$ by $\mathcal D$.
We characterize when $K_D\in \mathcal D$ is a torus knot, a satellite knot or a hyperbolic knot.
In a different direction, we show that for a given $n>0$, there are only finitely many L-space knots in $\D$ with genus less than $n$.
\end{abstract}
\maketitle
\tableofcontents
\section{Introduction}\label{sec:intro}
A knot $K\subset S^3$ is an {\it L-space knot} if it admits a positive Dehn surgery to an L-space.\footnote{An {\it L-space} $Y$ is a rational homology sphere with the simplest possible Heegaard Floer invariant, that is $\text{rk }\widehat{HF}(Y)=|H_1(Y;\mathbb Z)|$.} Examples include torus knots, and more broadly, Berge knots in $S^3$~\cite{Berge}. In recent years, work by many researchers provided insight on the fiberedness~\cite{Ni2009,Ghiggini}, positivity~\cite{Hedden2010}, and various notions of simplicity of L-space knots~\cite{Ath,HeddenBerge,Krcatovich2014}. For more examples of L-space knots, see~\cite{Hom2011a, Hom2016, HomLidmanVafaee, Motegi2014, Motegi2014b, Vafaee2013}.

Since the double branched cover of an alternating knot is an $L$-space \cite{OS-BDC2005}, the Montesinos trick allows us to construct a knot with an $L$-space surgery for every alternating knot with unknotting number one \cite{montesinos1973,Ozsvath2005a}. The primary focus of this paper is to study the family of L-space knots arising in the branched double cover of alternating knots with unknotting number one.

Let $(D,c)$ be an alternating knot diagram $D$ with an unknotting crossing $c$. By cutting out the interior of a small ball containing $c$ and taking the branched double cover we obtain the complement of a knot in $S^3$. We call this $K_{(D,c)}$, or $K_D$ if we implicitly assume that an unknotting crossing has been chosen. We also call the arc connecting the two arcs of the unknotting crossing $c$ of $D$ the \emph{unknotting arc}. 

Let $\D$ denote the set of $K_{(D,c)}$ obtained by considering all reduced alternating diagrams with unknotting number one. Here, a diagram is {\em reduced} if it does not contain any nugatory crossings (see Figure~\ref{fig:nugatory}).
 \begin{figure}[t]
\centering
\includegraphics{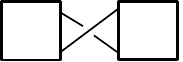}
\put(-78,11.5){\large $T_1$}
\put(-22.5,11.5){\large $T_2$}
\caption{A nugatory crossing.}
\label{fig:nugatory}
\end{figure}

\begin{rmk} It is worth noting that a diagram with unknotting number one is not enough to determine a knot in $\D$ on its own, and one really does need to specify the unknotting crossing. For example, any alternating diagram for the knot $8_{13}$ possesses both a positive and a negative unknotting crossing. These give rise to different knots in $\D$, namely the torus knots $T_{2,7}$ and $T_{3,-5}$.
\end{rmk}

By the work of Thurston~\cite{Thurston1982}, any knot in $S^3$ is precisely one of a torus knot, a satellite knot or a hyperbolic knot. In this paper, we characterize when each of these knot types arise in $\mathcal D$. When $K_D$ is a satellite knot, its exterior $S^3\setminus \nu(K_D)$ contains an incompressible, non-boundary parallel torus. (Here, $\nu(K_D)$ indicates an open tubular neighborhood of $K_D$ in $S^3$.) Correspondingly, there will be a Conway sphere $C$ in $D$.

\begin{defn}
Let $(D,c)$ be an alternating diagram with an unknotting crossing $c$. Let $C$ be a visible\footnote{A Conway sphere is {\em visible} in a diagram if it intersects the plane of the diagram in a single simple closed curve. A Conway sphere that is not visible, is {\it hidden} (cf. \cite[Section~3]{Thistlethwaite1991}).} Conway sphere in $D$, disjoint from the unknotting arc specified by $c$. We will call the component of $S^3\setminus C$ containing $c$ the {\em interior} of $C$. We will call the other component the {\em exterior}. We say that $C$ is {\em substantial} if the interior of $C$ contains more than one crossing and the exterior is not a rational tangle.
\end{defn}
\begin{rmk}
A substantial Conway sphere can be non-essential, since the interior may be a rational tangle.
\end{rmk}
We may now state the main result of the paper.
\begin{thm}\label{thm:satellitecharacterization}
Let $(D,c)$ be an alternating diagram with an unknotting crossing $c$. The following are equivalent:
\begin{enumerate}[(i)]
\item $K_D$ is a satellite,
\item $D$ contains a substantial Conway sphere.
\end{enumerate}
\end{thm}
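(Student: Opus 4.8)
The engine of the proof is the covering involution $\tau$ of the branched double cover. Writing $\Sigma_2(D)=E(K_D)\cup V$, where $V$ is the lift of a small ball about $c$ (a solid torus), $\tau$ restricts to an orientation-preserving involution of $E(K_D)$ whose fixed set is a one-manifold (the lift of the two strands of $D$ through the deleted ball) and whose quotient, after deleting the branch locus, is the tangle exterior $X=B^3\setminus\nu(D\setminus c)$; thus $E(K_D)$ is the branched double cover of the tangle $(B^3,D\setminus c)$ with that ball removed. Since a knot is a satellite exactly when its exterior contains an essential torus, the theorem amounts to a dictionary between essential tori in $E(K_D)$ and substantial Conway spheres of $D$.

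\emph{Proof that (ii) implies (i).} A substantial Conway sphere $C$, disjoint from the unknotting arc, lifts to a closed surface $\widetilde C\subset E(K_D)$; since $C$ meets the branch locus in four points, Riemann--Hurwitz together with orientability forces $\widetilde C$ to be a torus. The point is that because $c$ is an unknotting crossing, whenever the interior tangle is rational the knot $D$ is a sum of the exterior tangle with a rational tangle; as the unknot admits no decomposition as (rational)$\,+\,$(non-rational), the clause ``exterior not rational'' forces the interior tangle to be non-rational as well. Hence neither $\Sigma_2(\text{exterior tangle})$ nor $\Sigma_2(\text{interior tangle})$ is a solid torus, and one reads off essentiality of $\widetilde C$: it cannot compress to the exterior side (that would make the exterior branched cover a solid torus); it is not boundary-parallel (filling the $K_D$-boundary of the interior piece along the crossing slope recovers $\Sigma_2(\text{interior tangle})$, which is not a solid torus, whereas $T^2\times I$ would give one); and a standard argument using irreducibility of branched covers of prime (alternating) tangles rules out a compression on the interior side. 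So $K_D$ is a satellite.

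\emph{Proof that (i) implies (ii).} If $E(K_D)$ contains an essential torus then, the JSJ system being canonical, $\tau$ preserves it up to isotopy, and an equivariant-position argument yields an essential torus $T$ with $\tau(T)=T$ or $\tau(T)\cap T=\varnothing$. If $T$ is $\tau$-invariant and meets $\mathrm{Fix}(\tau)$, then after a transversality isotopy $\tau|_T$ is an orientation-preserving involution of the torus with isolated fixed points, hence the elliptic involution with exactly four fixed points, so $T/\tau$ caps off to a Conway sphere $C$ of $D$, disjoint from the unknotting arc since $T$ misses $V$; reversing the dictionary above, essentiality of $T$ forces the exterior tangle to be non-rational and the interior to carry more than one crossing, so $C$ is substantial once put in visible position. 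In the remaining cases ($T$ invariant and disjoint from $\mathrm{Fix}(\tau)$, or $\tau(T)\cap T=\varnothing$), the torus $T$, resp.\ $T\sqcup\tau(T)$, descends to an embedded torus $S\subset X$ (a Klein bottle is impossible, being a closed non-orientable surface in $S^3$), and one must show that such an $S$ either compresses in $X$ or can be traded for a visible Conway sphere. This, and the promotion of $C$ to visible position, is where the combinatorics of alternating diagrams enters: since $D$ is alternating and, being of unknotting number one, prime (Scharlemann), the standard-position theory of incompressible surfaces in alternating projections (Menasco--Thistlethwaite) puts $S$, resp.\ $C$, into a form compatible with the diagram.

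I expect this last point---reconciling the \emph{equivariant} torus supplied by covering-space theory with the \emph{visible} Conway sphere demanded by the diagram---to be the main obstacle. The equivariant torus theorem produces a torus adapted to $\tau$ but blind to the alternating structure, while Menasco-style results produce surfaces adapted to the diagram but blind to $\tau$; the crux is to run both arguments simultaneously and conclude that the quotient surface is isotopic to a Conway sphere meeting the plane of $D$ in a single circle, with non-rational exterior and at least two crossings inside. Notably, none of this invokes Heegaard Floer homology: the input is purely the three-dimensional topology of alternating diagrams and branched double covers.
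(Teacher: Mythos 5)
Your overall framework---translating between Conway spheres in $D$ and tori in $E(K_D)$ via the covering involution, and invoking an equivariant torus theorem for the converse---is the same as the paper's, but your proof of $(ii)\Rightarrow(i)$ contains a genuine error. You assert that the unknot admits no tangle-sum decomposition of the form (rational)$+$(non-rational) and deduce that the interior tangle of a substantial Conway sphere must be non-rational. Both statements are false. The quotient of the splitting $S^3=E(K)\cup\nu(K)$ by a strong inversion of any nontrivial strongly invertible knot $K$ exhibits the unknot as exactly such a sum (this is the Montesinos trick itself), and the paper explicitly remarks that the interior of a substantial Conway sphere may be rational; the definition only demands that the interior contain more than one crossing. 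This is not a peripheral case: it is precisely what happens for twirl moves, which produce $2$-cables (Proposition~\ref{prop:twirleffect}), i.e.\ genuine satellites whose companion torus bounds a solid torus containing $K_D$. For these, your boundary-parallelism test (``fill the interior piece along the crossing slope and check whether you get $T^2\times I$'') is vacuous, since the filled interior piece really is a solid torus. The paper's repair is combinatorial: one first flypes $C$ to maximize the crossings in its interior, shows the interior is then an alternating unknotting tangle (Lemma~\ref{lem:maxitangles}), and proves the key Claim that such a tangle with more than one crossing admits \emph{some} rational replacement of its distinguished crossing yielding a non-rational tangle (otherwise every surgery on the associated knot would be a lens space, forcing the tangle to be a single crossing). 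This produces a Dehn filling of the exterior of $K_{D'}$ in which the lifted torus is incompressible and not boundary parallel, hence essential in $E(K_{D'})$ itself. Without an argument of this kind you have not shown the lifted torus is essential.

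For $(i)\Rightarrow(ii)$ your outline coincides with the paper's (Theorem~\ref{thm:Holzmann}, an Euler characteristic count identifying the quotient, Menasco--Thistlethwaite standard position for hidden spheres), but you explicitly defer the decisive steps, and two of them are not mere positioning issues. First, boundary-parallelism of the equivariant torus must be excluded by applying the equivariant torus theorem to Dehn fillings in which the original essential torus survives, and the ``quotient is a torus disjoint from the diagram'' case is ruled out because prime alternating knots are not satellites. Second, the hidden Conway sphere analysis (Lemma~\ref{lem:hiddensphereanalysis}) does \emph{not} always produce a substantial visible sphere: Menasco's normal form with four tangles leaves open the possibility that $D$ is $2$-bridge, and that escape route is closed only by Proposition~\ref{lem:toruscharacterization}, since a $2$-bridge diagram yields a torus knot rather than a satellite. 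So the converse direction genuinely requires the torus-knot characterization as an input, which your sketch does not anticipate.
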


If $D$ is a 2-bridge knot diagram, then we show that $K_D$ is a torus knot. Conversely, it turns out that this is the only way that torus knots arise in $\D$. See~\cite[Theorem~1]{Murakami1986} for a characterization of 2--bridge knots with unknotting number one.
\begin{prop}\label{lem:toruscharacterization}
The knot $K_D$ is a torus knot if and only if $D$ is a diagram of a 2-bridge knot.
\end{prop}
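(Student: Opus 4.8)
The plan is to translate both directions into statements about the double branched cover $\Sigma(D)$ and the Montesinos trick. Recall that $\Sigma(D)$ is obtained from $S^3$ by a half-integral surgery on $K_D$, say $\Sigma(D)=S^3_{p/2}(K_D)$ with $p$ odd, and that the exterior of $K_D$ is the double branched cover of the tangle $B=D\setminus c$ obtained by deleting the crossing ball of $c$ from $D$.

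For the forward implication, assume $D$ is a diagram of a $2$-bridge knot. Then $\Sigma(D)$ is a lens space; moreover $D$ is nontrivial (a reduced alternating diagram carrying an unknotting crossing has positive crossing number, hence is not the unknot), so $|H_1(\Sigma(D))|=\det(D)\ge 3$ and $\pi_1(\Sigma(D))$ is a nontrivial cyclic group. Thus $K_D$ admits a surgery with cyclic fundamental group along the \emph{non-integral} slope $p/2$. By the Cyclic Surgery Theorem of Culler--Gordon--Luecke--Shalen, a knot in $S^3$ with a non-integral cyclic surgery is a torus knot, so $K_D$ is a torus knot.

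For the converse, assume $K_D$ is a torus knot $T_{a,b}$. Classically the exterior of $T_{a,b}$ is Seifert fibered over the disk with exactly two exceptional fibers, of orders $a$ and $b$ (and with no exceptional fibers if $K_D$ is trivial). Hence the tangle $B=D\setminus c$ has Seifert fibered double branched cover, so by the classification of tangles with Seifert fibered double branched cover---equivalently, by applying the equivariant Seifert fibered space theorem to the branch involution---$B$ is a Montesinos tangle $B=e+\tfrac{\beta_1}{\alpha_1}+\cdots+\tfrac{\beta_k}{\alpha_k}$ with $e\in\Z$ and $\alpha_i\ge 2$, whose rational summands $\tfrac{\beta_i}{\alpha_i}$ are in bijection with the exceptional fibers of $\Sigma(B)$. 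Therefore $B$ has at most two rational summands, of denominators $a$ and $b$, the remaining structure being integral.

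Reinserting the crossing $c$, which is itself the integral tangle $\pm 1$, and taking the numerator closure, $D$ is the numerator closure of $(e\pm 1)+\tfrac{\beta_1}{a}+\tfrac{\beta_2}{b}$, a sum of at most two rational tangles together with an integral part; such a diagram is a $2$-bridge diagram. (Equivalently, $\Sigma(D)$ is then a Seifert fibered space over $S^2$ with at most two exceptional fibers, hence a lens space, so $D$ is $2$-bridge by Hodgson--Rubinstein; via Moser's classification of surgeries on torus knots this also pins down $p=2ab\pm 1$.) The step I expect to be the main obstacle is making the tangle classification precise: one must know that a two-string tangle with no closed components and Seifert fibered double branched cover is, up to the appropriate notion of tangle equivalence, a sum of rational tangles, with the bookkeeping that the number of non-integral summands equals the number of exceptional fibers. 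This is where the equivariant geometrization input and a careful treatment of the branch locus are needed; I would either quote the relevant statement or reprove it in the restricted setting at hand. Everything else is formal, relying only on the classical description of torus-knot exteriors, the Cyclic Surgery Theorem, and the dictionary between Montesinos tangles and Seifert fibered pieces.
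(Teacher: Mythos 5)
Your argument is correct in outline, but both halves travel a different road from the paper's. For the direction ``$D$ 2-bridge $\Rightarrow K_D$ torus,'' your appeal to the Cyclic Surgery Theorem is exactly the alternative the authors flag in a remark after their proof; their preferred argument is softer and more self-contained: the Conway sphere through the unknotting crossing of a 2-bridge diagram bounds rational tangles on both sides, so it lifts to an unknotted torus in the double branched cover on which $K_D$ visibly lies. For the converse, the paper does \emph{not} analyze the tangle $B=D\setminus c$ at all: it quotes the constraint $d=2rs\pm1$ for a half-integral surgery on $T_{r,s}$ producing the double branched cover of an alternating knot (from McCoy's changemaker-lattice work), feeds this into Moser's classification to conclude $\Sigma(D)$ is a lens space, and then invokes the fact that only 2-bridge knots have lens space double branched covers. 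Your route instead pushes the Seifert fibration of the torus-knot exterior down through the branch involution to exhibit $B$ as a Montesinos tangle with at most two non-integral summands. This works, but the step you yourself identify as the obstacle --- that a two-string tangle with Seifert fibered double branched cover is a Montesinos tangle, with summands matching exceptional fibers --- carries essentially all the weight: it needs the equivariant fibration theorem (Meeks--Scott/Tollefson) plus the uniqueness of the strong inversion on torus-knot exteriors, and you must also rule out the involution swapping the two exceptional fibers (which is automatic here since $\gcd(a,b)=1$ forces $a\neq b$) and treat the unknot/solid-torus case separately. So your proposal trades one deep external input (Donaldson-theorem-based surgery obstructions) for another (equivariant geometrization of tangles); the paper's choice has the advantage that the hard input is a single quotable inequality rather than a classification you would need to state and justify carefully.
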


Combining Proposition~\ref{lem:toruscharacterization} and Theorem~\ref{thm:satellitecharacterization} allows us to determine for which diagrams $K_D$ is a hyperbolic knot. (See also~\cite{Gordon2004non,Gordon2006knots} for relevant results.)
\begin{cor}\label{cor:HyperbolicCharacterization}
Let $(D,c)$ be an alternating diagram with an unknotting crossing $c$. Then $K_D$ is a hyperbolic knot if and only if $D$ is not a 2-bridge knot diagram and $D$ does not contain a substantial Conway sphere.
\end{cor}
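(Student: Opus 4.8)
The plan is to read the corollary off directly from the two results just stated, using Thurston's trichotomy as the glue. Recall from~\cite{Thurston1982} that any knot in $S^3$ is exactly one of a torus knot, a satellite knot, or a hyperbolic knot, these three classes being mutually exclusive and exhaustive. Consequently $K_D$ is hyperbolic if and only if it is simultaneously not a torus knot and not a satellite.

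The next step is to translate each of these two negations into a condition on the diagram. By Proposition~\ref{lem:toruscharacterization}, $K_D$ is \emph{not} a torus knot precisely when $D$ is not a diagram of a $2$-bridge knot; by Theorem~\ref{thm:satellitecharacterization}, $K_D$ is \emph{not} a satellite precisely when $D$ contains no substantial Conway sphere. Taking the conjunction of these two statements yields exactly the asserted equivalence, so no further argument is required.

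There is essentially no obstacle here beyond a consistency check: because the classes in Thurston's trichotomy are disjoint, the diagrammatic conditions ``$D$ is a $2$-bridge diagram'' and ``$D$ contains a substantial Conway sphere'' are automatically mutually exclusive --- which is reassuring but is not something we need to prove, since a torus knot is never a satellite. One could also note in passing that $K_D$ is always a nontrivial knot (a reduced alternating diagram admitting an unknotting crossing has positive crossing number, and a knot with unknotting number one is nontrivial), so the trichotomy genuinely applies and there is no degenerate case to dispose of. The real content of the corollary lies entirely in Proposition~\ref{lem:toruscharacterization} and Theorem~\ref{thm:satellitecharacterization}.
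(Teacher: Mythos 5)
Your overall strategy is exactly the paper's: the corollary is stated there with no separate proof beyond the remark that it follows by combining Proposition~\ref{lem:toruscharacterization} and Theorem~\ref{thm:satellitecharacterization} via Thurston's trichotomy, which is precisely what you do. However, your closing aside contains a genuine error: you assert that $K_D$ is always a nontrivial knot because ``a knot with unknotting number one is nontrivial.'' That argument applies to the alternating knot $D$, not to the lifted knot $K_D$; by Proposition~\ref{UnknotDistinguish}, $K_D$ \emph{is} the unknot exactly when $D$ is a clasp diagram. Since Thurston's trichotomy is a statement about nontrivial knots, this is precisely the degenerate case you claimed does not exist, and it does need a word: if $D$ is a clasp diagram then $D$ is $2$-bridge, so the right-hand side of the equivalence fails, and $K_D$ (the unknot) is indeed not hyperbolic, so the equivalence survives --- equivalently, one reads Proposition~\ref{lem:toruscharacterization} as counting the unknot among the torus knots, which is how the paper uses it. With that one sentence added, your proof is complete and coincides with the paper's.
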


Conjecturally, there are only finitely many $L$-space knots in $S^3$ with a given genus (\cite[Conjecture~6.7]{Hedden2014} and~\cite[Conjecture~1.2]{Baker2015}). As a final result, we verify this conjecture for knots in $\D$.
\begin{prop}\label{prop:genus}
For a given $n>0$ there are finitely many knots in $\D$ with genus less than $n$.
\end{prop}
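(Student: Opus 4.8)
The plan is to fix the Thurston type of $K_D$, using Proposition~\ref{lem:toruscharacterization} and Theorem~\ref{thm:satellitecharacterization}, and to combine this with the rigidity of L-space knots of bounded genus. First I would set up two reductions. Since $K_D$ is an L-space knot, its Alexander polynomial has the Ozsv\'ath--Szab\'o form
\[
\Delta_{K_D}(t)=(-1)^{k}+\sum_{j=1}^{k}(-1)^{k-j}\bigl(t^{n_j}+t^{-n_j}\bigr),\qquad 0<n_1<\dots<n_k=g(K_D),
\]
so once $g(K_D)<n$ there are only finitely many possibilities for $\Delta_{K_D}$, hence (for L-space knots) for $\widehat{HFK}(K_D)$ and for all the integers $V_i(K_D)$. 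It therefore suffices, for each such $\Delta$, to bound the number of $K_D\in\D$ realizing it; write $g=\deg\Delta$. Second, by the Montesinos trick $\Sigma_2(D)=S^3_{\varepsilon\det(D)/2}(K_D)$ for some sign $\varepsilon$; replacing $D$ by its mirror (still an alternating unknotting--number--one diagram, with the same $g(K_D)$) we may take $\varepsilon=+1$, and since $\Sigma_2(D)$ is an L-space and $K_D$ an L-space knot we get $\det(D)/2\ge 2g-1$. Now split according to whether $K_D$ is a torus, satellite, or hyperbolic knot.

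If $D$ is $2$-bridge then $K_D$ is a torus knot by Proposition~\ref{lem:toruscharacterization}, and there are only finitely many torus knots of genus $<n$, so this case is immediate. If $D$ is not $2$-bridge but contains a substantial Conway sphere then $K_D$ is a satellite; I would write $K_D=P(C)$ with companion $C$, winding number $w$, and pattern $P$, and use that $C$ is again an L-space knot, that $w\ge 1$, and that $g(K_D)\ge w\,g(C)$ with $g(C)<g(K_D)$ whenever the satellite is non-trivial. Thus $\Delta_C$ lies in the finite list, and an induction on genus reduces the satellite case to the torus case together with the hyperbolic case, provided one also knows that only finitely many patterns $P$ of bounded genus and winding number can produce an L-space knot over a given companion --- which follows from the known restrictions on satellite operations preserving the L-space--knot property (in particular, that the patterns involved are essentially cables).

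The remaining and hardest case is when $D$ is not $2$-bridge and contains no substantial Conway sphere, so that $K_D$ is hyperbolic. Suppose toward a contradiction that there are infinitely many pairwise distinct hyperbolic knots $K^{(1)},K^{(2)},\dots\in\D$ with $\Delta_{K^{(i)}}=\Delta$, arising from diagrams $D^{(i)}$, and put $p_i=\det(D^{(i)})$, so $\Sigma_2(D^{(i)})=S^3_{p_i/2}(K^{(i)})$. If the $p_i$ were bounded then, by the classical inequality $c(D^{(i)})\le\det(D^{(i)})$ for reduced alternating diagrams, the crossing numbers $c(D^{(i)})$ would be bounded, leaving finitely many diagrams and hence finitely many $K^{(i)}$; so $p_i\to\infty$. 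Now $\Sigma_2(D^{(i)})$ bounds the sharp negative--definite Goeritz $4$--manifold of $D^{(i)}$ by Ozsv\'ath--Szab\'o, while $S^3_{p_i/2}(K^{(i)})$ bounds the positive--definite trace of the surgery, of second Betti number $2$; gluing these along $\Sigma_2(D^{(i)})$ and invoking Donaldson's diagonalization theorem exhibits the Goeritz lattice of $D^{(i)}$ as the orthogonal complement of a changemaker vector $\sigma_i$ of square $p_i$, in the spirit of Greene's treatment of unknotting number one. Since $K^{(i)}$ is an L-space knot, $\sigma_i$ is highly constrained: its behaviour is governed by the staircase complex of $K^{(i)}$, hence by $\Delta$, apart from data that merely records the (inessential) size of the surgery coefficient. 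Combining this with the structural results on which graph lattices can occur as changemaker complements should force the $K^{(i)}$ into a finite list, contradicting their distinctness.

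Establishing this last implication --- that, with $\Delta_{K^{(i)}}$ fixed, only finitely many non-isotopic hyperbolic knots $K^{(i)}$ can appear as $K_D$ --- is the crux of the argument and the main obstacle: the torus case is elementary, the satellite case is a bookkeeping reduction to it and to the hyperbolic case, and essentially all of the real work lies in controlling how much the graph lattice of $D^{(i)}$, and with it the knot $K^{(i)}$, can vary once $\Delta_{K^{(i)}}$ is pinned down.
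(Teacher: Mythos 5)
There is a genuine gap, and it is exactly where you say it is: the hyperbolic case is left unproved, and that case cannot be dispatched by the tools you have assembled. The decisive missing idea is an \emph{upper} bound on $\det D$ in terms of $g(K_D)$. You derive $\det(D)/2\ge 2g-1$ from the fact that a positive L-space surgery slope on an L-space knot is at least $2g-1$; but a lower bound on the determinant is useless for finiteness --- what one needs is that bounded genus forces bounded determinant, so that only finitely many alternating knots (and hence finitely many reduced alternating diagrams, hence finitely many $K_D$) can occur. The paper gets this in one line from \cite[Theorem~1.1]{mccoy2014bounds}: if $S^3_{d/2}(K_D)\cong\Sigma(D)$ is the branched double cover of an alternating knot, then $\det D/2\le 4g(K_D)+3$. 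With that inequality in hand no trichotomy into torus/satellite/hyperbolic is needed at all, and Proposition~\ref{lem:toruscharacterization} and Theorem~\ref{thm:satellitecharacterization} play no role in the proof.

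Your sketch of the hyperbolic case --- gluing the sharp Goeritz $4$--manifold to the surgery trace, applying Donaldson diagonalization, and constraining the resulting changemaker vector by the staircase of $\Delta$ --- is essentially the machinery that underlies McCoy's theorem, so if you carried it to completion you would in effect be reproving the needed upper bound rather than circumventing it; as written, the step ``should force the $K^{(i)}$ into a finite list'' is an assertion, not an argument. Two smaller issues: in the satellite case, the claim that only finitely many patterns of bounded genus and winding number yield L-space knots over a given companion is not justified and is not needed once the determinant bound is available; and the reduction ``finitely many $\Delta$'s, hence finitely many $\widehat{HFK}$'s'' is correct for L-space knots but does not by itself limit the number of knots realizing a given $\Delta$, which is the whole difficulty you then face.
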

\begin{proof}
Let $K_D\in \D$ correspond to a diagram $(D,c)$. Using Montesinos trick~\cite{montesinos1973}, we have $S_\frac{d}{2}^3(K_D)\cong \Sigma(D)$, for some $d$ with $|d|=\det D$. Thus, by \cite[Theorem~1.1]{mccoy2014bounds}, we have $\det D /2 \le 4g(K_D) +3$. The result follows since there are finitely many alternating knots of a given determinant and each alternating knot has finitely many reduced alternating diagrams up to planar isotopy.
\end{proof}
The rest of the paper is organized as follows. In Section~\ref{Tsukamoto}, we state Tsukamoto's theorem~\cite[Theorem~5]{Tsukamoto2009} that gives a set of three moves ({\it flype, tongue}, and {\it twirl}) enabling us to go from a clasp diagram (Figure~\ref{fig:claspdiagram}) to any diagram $(D,c)$. We also determine the corresponding effect of these moves to the knots in $\mathcal D$. Theorem~\ref{thm:satellitecharacterization} and Proposition~\ref{lem:toruscharacterization} are proved in Section~\ref{SatelliteClassification}.\\

\noindent{\bf Acknowledgements}. We would like to thank Ken Baker, Josh Greene, Matt Hedden, John Luecke and Tom Mark for helpful conversations. We are also grateful to an anonymous referee for their detailed feedback.

\section{Almost alternating diagrams of the unknot}\label{Tsukamoto}

Recall that a diagram $D$ is said to be {\em almost alternating} if it is non-trivial, non-alternating and can be changed into an alternating diagram by changing a single crossing. Given an almost alternating diagram we call a crossing which can be changed to obtain an alternating diagram a {\em dealternator}. We will refer to the crossing arc of a dealternator as a {\em dealternating arc}.

As the following remark shows, for every diagram $(D,c)$ there is a reformulation of $K_{(D, c)}$ in terms of dealternating arcs. We will sometimes find it convenient to use this alternative approach.

{\rmk Given an alternating diagram with an unknotting crossing $(D,c)$, let $\widetilde{D}$ be the almost alternating diagram of the unknot obtained by changing $c$. Since $\widetilde{D}$ is unknotted, taking the double branched cover lifts its dealternating arc to a knot in $S^3$, which can easily seen to be $K_{(D,c)}$.}

\begin{figure}[h]
\centering
\def\svgwidth{13cm}
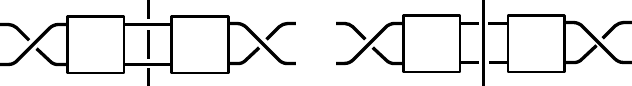
\caption{Flyped tongues. In both pictures the dealternator is the top central crossing.}
\label{fig:flypedtongue}
\end{figure}

Tsukamoto has studied the structure of almost alternating diagrams of the unknot. The key result we require shows that most almost alternating diagrams of the unknot contain a flyped tongue, where a {\em flyped tongue} is a region appearing as in Figure~\ref{fig:flypedtongue} \cite[Theorem~4]{Tsukamoto2009}.

{\thm [Tsukamoto]\label{thm:Tsukflypedtongue}
Any reduced almost alternating diagram of the unknot which is not an unknotted clasp diagram (see Figure~\ref{fig:claspdiagram}) contains a flyped tongue.}
\begin{figure}[h]
  \centering
  \def\svgwidth{4cm}
  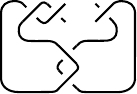
 \caption{An unknotted clasp diagram}
 \label{fig:claspdiagram}
\end{figure}

We recall some conventions on tangles. For the purposes of this paper a tangle is a pair of unoriented arcs properly embedded in $B^3$ with the end points are mapped to four marked points on $\partial B^3$. We consider tangles up to isotopy fixing $\partial B^3$. These will usually arise as the intersection of a knot diagram with a ball bounded by a visible Conway sphere. In this setting, a tangle is said to be {\em crossingless} if the region of the diagram contained in the ball is without crossings, as shown in Figure~\ref{fig:crossingless}. We call a tangle which is equivalent to a crossingless tangle a {\em trivial} tangle. If a tangle can be isotoped to a trivial tangle by an isotopy of $B^3$ (with no requirement that $\partial B^3$ is fixed), then we call it a {\em rational tangle}.
\begin{figure}[h]
  \centering
  \def\svgwidth{2cm}
  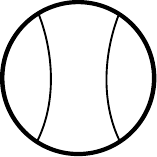
 \caption{A crossingless tangle}
 \label{fig:crossingless}
\end{figure}


A reduced almost alternating diagram which contains a flyped tongue can be isotoped to an almost alternating diagram with fewer crossings by twisting the two tangles so that the outermost two crossings are removed from the diagram. If the result of this isotopy is not reduced, then we can further perform a Reidemeister I move to obtain a reduced diagram. Given an almost alternating diagram of the unknot, Theorem~\ref{thm:Tsukflypedtongue} then allows us to carry out a sequence of such isotopies. Carrying out this argument more carefully allows one to show that all reduced alternating diagrams with an unknotting crossing can be built up by a sequence of simple local moves. The following is immediate from~\cite[Theorem~5]{Tsukamoto2009} (see also \cite[Theorem~2]{mccoy17unknotting}).

{\thm [Tsukamoto] \label{thm:altTsuka} Let $(D,c)$ be a reduced alternating diagram with an unknotting crossing. Then there are a sequence of reduced alternating diagrams $D_i$ with unknotting crossings $c_i$,
\[(D_1,c_1) \to \dots \to (D_p,c_p)=(D,c),\]
such that $(D_1,c_1)$ is a clasp diagram (see Figure~\ref{fig:claspdiagram}), and for each $i$, $(D_i,c_i)$ is obtained from $(D_{i-1},c_{i-1})$ by either a flype fixing $c_{i-1}$, a twirl move, or a tongue move (see Figure~\ref{fig:alttwirltongue}).}

\begin{figure}[t]
\centering
\def\svgwidth{7cm}
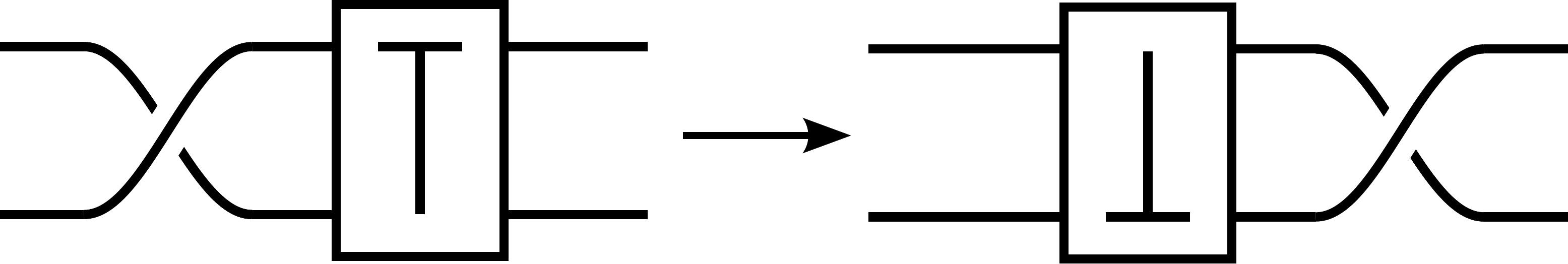
\caption{A flype move}
\label{flype}
\end{figure}

\begin{figure}[t]
  \centering
  \def\svgwidth{7cm}
  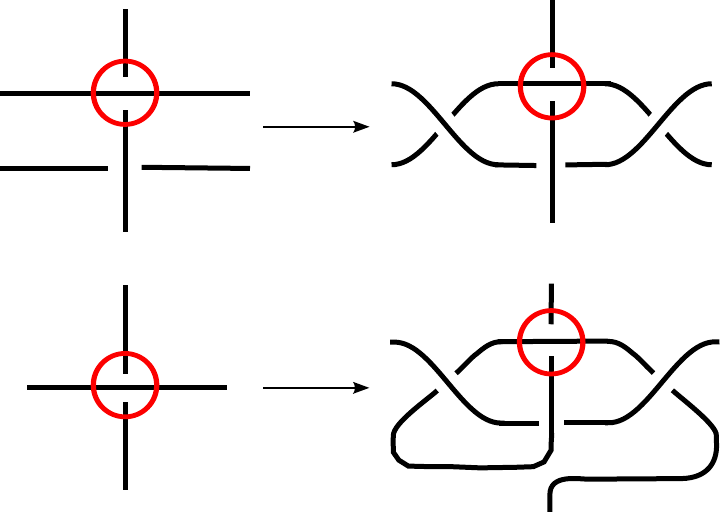
 \caption{Up to reflection, the tongue move (top) and a twirl move (bottom). In each case the new unknotting crossing is marked by the red circle.}
\label{fig:alttwirltongue}
\end{figure}

\begin{figure}[t]
\centering
\def\svgwidth{10cm}
  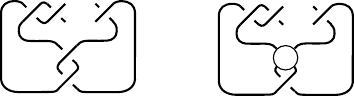
\caption{A clasp diagram (left) and the rational tangle obtained by excising a ball containing an unknotting crossing (right).}
\label{fig:clasp}
\end{figure}
\subsection{The corresponding operations in $\D$}\label{sec:OperationsInD}
In this section, we turn our focus to the moves, used in Theorem~\ref{thm:altTsuka}, that allow us to build up any alternating diagram with unknotting number one. In principle, by studying the effect of these moves on the knots in $\D$, one should be able to obtain an understanding of the knots arising in $\D$. In practice, however, this approach is not particularly straightforward to carry out. As we will see, the effects of a twirl move can be understood easily enough, but the effects of a tongue move are much more subtle.

The diagrams from which Theorem~\ref{thm:altTsuka} allows us to build all other alternating knots with unknotting number one are the clasp diagrams. According to the following proposition, the clasp diagrams are precisely those which give rise to the unknot in $\D$.

\begin{prop}\label{UnknotDistinguish}
$K_D$ is the unknot if and only if $D$ is a clasp diagram.
\end{prop}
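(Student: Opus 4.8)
The plan is to reformulate everything in terms of double branched covers. Let $B$ be a ball containing the unknotting crossing $c$ and let $T$ denote the complementary $2$-string tangle $\bigl(S^3\smallsetminus B,\ D\cap(S^3\smallsetminus B)\bigr)$. By construction the double branched cover of $T$ is the knot exterior $S^3\smallsetminus\nu(K_D)$, and filling the two possible crossings back in at $c$ recovers $\Sigma(D)\cong S^3_{d/2}(K_D)$ on one side and $S^3$ on the other (the Montesinos trick). I would then use two standard facts: a knot in $S^3$ is trivial precisely when its exterior is a solid torus, and a $2$-string tangle is rational precisely when its double branched cover is a solid torus. Together these reduce the proposition to the purely topological statement that $T$ is a rational tangle if and only if $D$ is a clasp diagram. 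The ``if'' direction is then immediate, since for a clasp diagram Figure~\ref{fig:clasp} exhibits $T$ as a rational tangle, so $S^3\smallsetminus\nu(K_D)$ is a solid torus and $K_D$ is unknotted.

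For the ``only if'' direction, suppose $K_D$ is the unknot, so that $T$ is a rational tangle; equivalently $\Sigma(D)$ is the result of $d/2$-surgery on the unknot, namely the lens space $L(d,2)$. Re-inserting the elementary tangle at $c$ into the rational tangle $T$ produces $D$, so $D$ is a closure of a rational tangle and hence a $2$-bridge knot. So far this only constrains the knot type of $D$; the remaining work --- and the step I expect to be the main obstacle --- is to promote this to a statement about the given reduced alternating \emph{diagram}, namely that $D$ with its chosen unknotting crossing $c$ is a clasp diagram.

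To carry this out I would invoke the flyping theorem, so that every reduced alternating diagram of the $2$-bridge knot $D$ is a standard rational diagram $C(a_1,\dots,a_n)$, and then run a direct continued-fraction computation to determine which single crossing changes of such a diagram yield the unknot; this should show that the existence of the unknotting crossing $c$ forces $D$ to be a twist knot and forces $(D,c)$, up to planar isotopy and reflection, to be a clasp diagram with $c$ in the clasp. A possible alternative endgame, avoiding the $2$-bridge classification, is to argue by contradiction from Tsukamoto's structure theorem: the diagram $\widetilde D$ obtained from $D$ by changing $c$ is a reduced almost alternating diagram of the unknot, so if $D$ were not a clasp diagram then $\widetilde D$ is not an unknotted clasp diagram and Theorem~\ref{thm:Tsukflypedtongue} produces a flyped tongue in $\widetilde D$, from which one would deduce that $T$ cannot be rational, contradicting the hypothesis. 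Deciding which of these routes is cleanest and executing the relevant case analysis is the heart of the argument.
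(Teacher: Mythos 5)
Your ``if'' direction is exactly the paper's argument: the complementary tangle of a clasp diagram is rational (Figure~\ref{fig:clasp}), so its double branched cover $S^3\setminus\nu(K_D)$ is a solid torus and $K_D$ is unknotted. For the ``only if'' direction you also correctly reach the key fact --- $K_D=U$ forces $\Sigma(D)\cong S^3_{d/2}(U)\cong L(d,2)$ with $d=\det D$ --- but the endgame you propose from there contains a genuine error. You claim that a continued-fraction analysis ``should show that the existence of the unknotting crossing $c$ forces $D$ to be a twist knot.'' That is false: there are many $2$-bridge knots with unknotting number one that are not twist knots, and the paper's own introductory remark gives one ($8_{13}$, whose alternating diagrams carry unknotting crossings yielding $T_{2,7}$ and $T_{3,-5}$ in $\D$); more generally, the remark following Proposition~\ref{lem:toruscharacterization} asserts that \emph{every} torus knot arises from some $2$-bridge diagram with an unknotting crossing. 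So ``$D$ is $2$-bridge with an unknotting crossing'' is strictly weaker than ``$K_D=U$,'' and a computation using only the former cannot conclude that $D$ is a clasp diagram. Your alternative endgame via Theorem~\ref{thm:Tsukflypedtongue} has the same problem in a different guise: the presence of a flyped tongue in $\widetilde D$ does not directly obstruct the complementary tangle of the dealternator from being rational (again, $8_{13}$ is not a clasp diagram, so its almost alternating unknot diagram contains a flyped tongue), and making that implication precise would essentially amount to reproving the proposition.

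The missing step is short and is what the paper actually does: since $\Sigma(D)\cong L(d,2)$ and the only knots with lens-space double branched covers are the $2$-bridge knots, with $\Sigma(b(p,q))\cong L(p,q)$ determining the knot, the knot type of $D$ is pinned down as $b(d,2)$ --- not merely as \emph{some} $2$-bridge knot. The knot $b(d,2)$ is the twist knot whose unique reduced alternating diagram (up to planar isotopy, by the flyping theorem) is the clasp diagram of Figure~\ref{fig:claspdiagram}. In other words, you should use the specific lens space $L(d,2)$ you already computed, rather than discarding it and falling back on the unknotting-number-one hypothesis.
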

\begin{proof}
For any odd integer $d$, the manifold $S_{d/2}^3(U)$ is homeomorphic to the lens space $L(d,2)$. Thus if $K_D=U$, then $D$ must be a clasp diagram~\cite[Corollary~4.12]{Hodgson1985}. Conversely, as shown in Figure~\ref{fig:clasp}, when a ball containing an unknotting crossing is cut out, we obtain a rational tangle. Thus if $D$ is a clasp diagram then the complement of $K_D$ is a solid torus and hence $K_D$ is the unknot.
\end{proof}

{\rmk Note that as the flypes in Theorem~\ref{thm:altTsuka} can be chosen to fix the unknotting crossing, they will clearly leave the knot $K_D$ unchanged.}\\

A twirl move corresponds to a cabling operation. It is a special case of a more general operation for producing satellite knots in $\D$ studied in the following section.
\begin{prop}\label{prop:twirleffect}
If $(D',c')$ is obtained from $(D,c)$ by a twirl move, then $K_{D'}$ is obtained from $K_D$ by taking a cable with winding number two.

\end{prop}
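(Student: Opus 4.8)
The plan is to reduce the statement to an explicit computation with the branched double cover of a single tangle. Using the reformulation from the remark above, changing $c$ in $D$ produces an almost alternating diagram $\widetilde D$ of the unknot whose dealternating arc lifts to $K_D\subset\Sigma(\widetilde D)\cong S^3$; equivalently, if $B_c$ is a small ball meeting $D$ in the trivial tangle at $c$ (together with the unknotting arc), then $S^3\setminus\nu(K_D)=\Sigma\bigl(S^3\setminus B_c,\,D\setminus B_c\bigr)$ while $\nu(K_D)=\Sigma(B_c,D\cap B_c)$ is a solid torus. Reading off Figure~\ref{fig:alttwirltongue}, the twirl move is supported in a ball $B\supseteq B_c$ about the unknotting crossing: outside $B$ the diagram is unchanged, and inside $B$ the trivial tangle at $c$ is replaced by a fixed two-string tangle $\tau$ in which the new unknotting crossing $c'$ lies innermost. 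Thus $B$, shrunk to a small ball about $c'$, plays the role of the ball defining $K_{D'}$.

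Writing $V:=\Sigma(B,\tau)$, the fact that $D'$ and $D$ agree outside $B$ gives
\[
S^3\setminus\nu(K_{D'})\;=\;\bigl(S^3\setminus\nu(K_D)\bigr)\;\cup_{T}\;\bigl(V\setminus\nu(K_{D'})\bigr),
\]
where $T$ is the torus double-covering the Conway sphere $\partial B$, glued to $\partial\nu(K_D)$ on one side. Hence the proof comes down to two claims about $\tau$: (a) $V=\Sigma(B,\tau)$ is again a solid torus — equivalently, $\tau$ with $c'$ forgotten is a rational tangle — so that the decomposition above genuinely presents $K_{D'}$ as a satellite of $K_D$ with companion torus $\partial\nu(K_D)$ and companion solid torus $V$ isotopic to $\nu(K_D)$ in $S^3$; and (b) the pattern $K_{D'}\subset V$, namely the lift of the dealternating arc of $\tau$, is the $(2,q)$-torus knot in $V$ for some odd $q$, so that $K_{D'}$ is the $(2,q)$-cable of $K_D$. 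Claim (a) is a direct check from Figure~\ref{fig:alttwirltongue}. For (b) I would compute the class of $K_{D'}$ in $H_1(V)\cong\Z$ directly from the double cover — the lift of an arc joining two of the four punctures of $\tau$ — and verify it is $\pm2$; to see that the pattern is a genuine cable one can then identify $V\setminus\nu(K_{D'})$ with the $(2,q)$-cable space, a Seifert fibred space over the annulus with a single exceptional fibre of order $2$, or simply isotope $K_{D'}$ onto a torus concentric in $V$.

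The genuinely hands-on part — and the main obstacle — is pinning down the twirl tangle $\tau$ and its branched double cover: one must check that the twirl really inserts a \emph{rational} tangle (otherwise the companion piece would be more complicated than a solid torus) and that it wraps around exactly once, so that the winding number is $2$ rather than a larger even integer. Everything else is formal: the gluing identity above yields the result once $\tau$ is understood, and the framing $q$ of the cable is determined by the number of crossings introduced by the twirl, though it plays no role in the statement.
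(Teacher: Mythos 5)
Your overall strategy --- decompose along the Conway sphere $\partial B$ supporting the twirl, identify the piece on the $B$-side as a tubular neighbourhood of $K_D$, and locate the lift of the new dealternating arc inside it --- is sound, and it is essentially a formalisation of what the paper does: its proof is a single sentence pointing at Figure~\ref{Fig:twirl}, which shows the new dealternating arc isotoped back into the original diagram (the blue arc), visibly a winding-number-two pattern in the lift of the original crossing ball. Your exterior decomposition $S^3\setminus\nu(K_{D'})=(S^3\setminus\nu(K_D))\cup_T\bigl(V\setminus\nu(K_{D'})\bigr)$ is also correct, since deleting $\nu(K_{D'})$ from $\Sigma(B,\tau)$ does not see the crossing inside $B_{c'}$.

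The genuine gap is in passing from that decomposition to the satellite structure: you identify the companion solid torus as $V=\Sigma(B,\tau)$, but this is the wrong object. The ambient $S^3$ containing $K_{D'}$ is $\Sigma(\widetilde{D'})$, the branched double cover of the \emph{unknot} diagram obtained from $D'$ by changing $c'$; filling your decomposition back in along $\partial\nu(K_{D'})$ therefore reconstitutes $\widetilde V=\Sigma(B,\tilde\tau)$, where $\tilde\tau$ is the twirl tangle with the distinguished crossing $c'$ changed. By contrast, $(S^3\setminus\nu(K_D))\cup_T\Sigma(B,\tau)$ is $\Sigma(D')$, an L-space of order $\det D'$, not $S^3$. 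Consequently your claim (a) aims at the wrong tangle --- rationality of $\tau$ is neither the needed hypothesis nor sufficient --- and the assertion that a solid torus bounded by $T$ is automatically ``isotopic to $\nu(K_D)$'' is unjustified: a solid torus glued to $S^3\setminus\nu(K_D)$ along the wrong slope is not a tubular neighbourhood of $K_D$. The correct input is that $\tilde\tau$ is isotopic rel boundary to the single changed crossing (this is built into the twirl move being a move on almost alternating diagrams of the unknot, and is the ``alternating unknotting tangle'' condition of Definition~\ref{UnknottingTangle}); that isotopy gives at once that $\widetilde V$ is carried to $\nu(K_D)$, so it is an honest tubular neighbourhood of $K_D$ with the meridians matching. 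Your winding-number computation in (b) must likewise be performed in $\widetilde V$ rather than $V$ (the class of the core of a filling solid torus depends on the filling slope), i.e., for the image of the new dealternating arc under the isotopy reversing the twirl --- which is exactly the blue arc of Figure~\ref{Fig:twirl}. With $\tau$ replaced by $\tilde\tau$ throughout, your argument closes up and recovers the paper's proof.
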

\begin{proof}
This can be seen from Figure~\ref{Fig:twirl} which shows how the dealternating arc changes under introducing a twirl.\footnote{We may construct $K_D$ from $(D,c)$ as follows. First, we change the crossing $c$ in $D$, and isotope the resulting diagram to obtain the unknot, while we keep track of the unknotting arc throughout the isotopies. Second, we double the resulting arc to obtain $K_D$. See~\cite{Hodgson1985}, for instance.} After performing the isotopy to remove the twirl it is clear that the blue arc can be isotoped onto the sphere $S$ whose intersection with the plane is the circle around the crossing. This shows that after taking the double branched cover, $K_D'$ can be isotoped to lie on the torus obtained by taking the double branched cover of $S$. As the lift of $S$ is the boundary of a tubular neighborhood of $K_D$, this shows that $K_D'$ is a cable of $K_D$. It is straightforward to see that the winding number of this cable is two by considering a curve on $S$ which lifts to a meridian of $K_D$.
\end{proof}

\begin{figure}[ht]
\centering
\def\svgwidth{7cm}
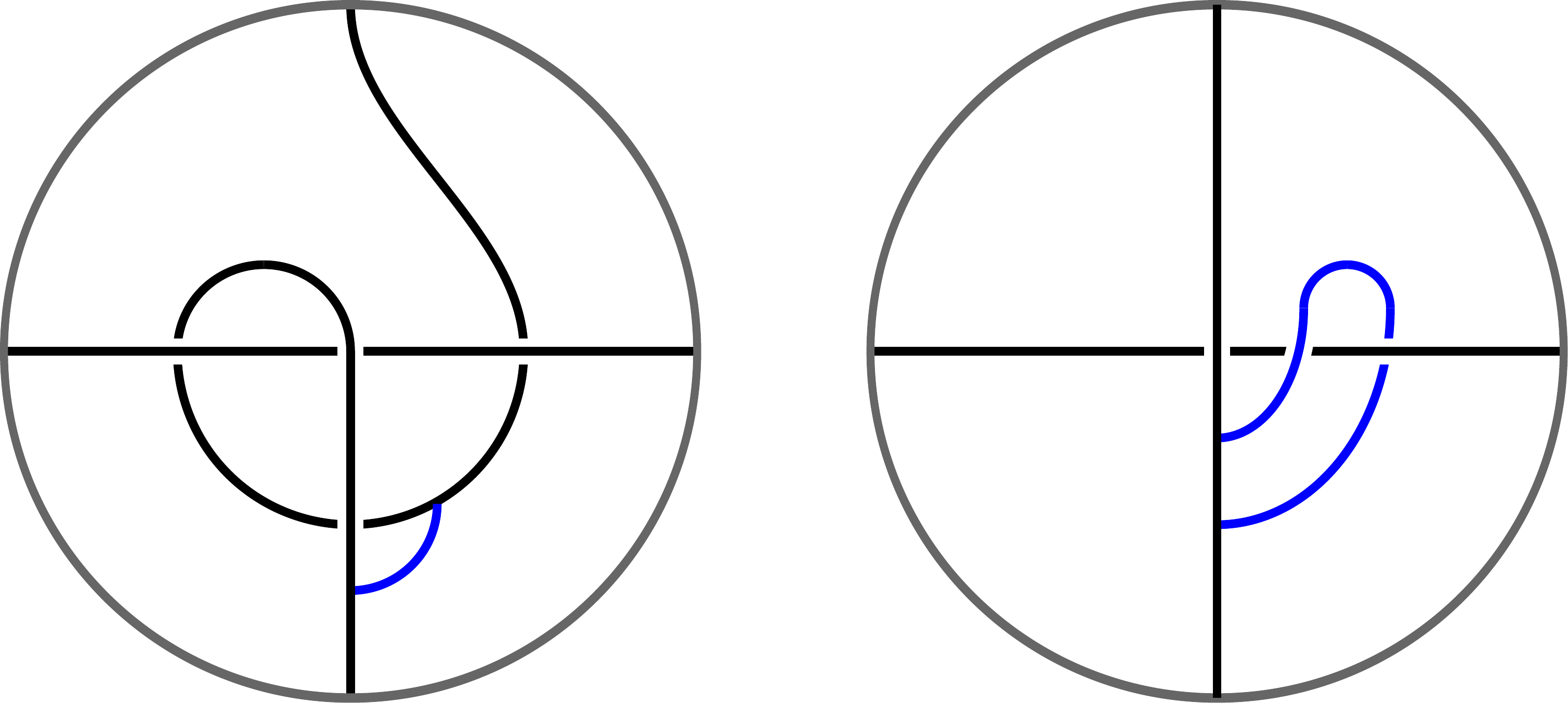
\caption{
{The diagram on the left is obtained from a twirl move on the right one. The blue arc shows how the unknotting arc in $D'$ appears in $D$ after reversing the twirl move.}}
\label{Fig:twirl}

\end{figure}

Finally we turn our attention to the tongue move. As we will see this corresponds to a band sum operation in $\D$. In certain cases, it can be described precisely what the band sum is, however, in general, different tongue moves on the same diagram will result in different knots in $\D$. For example, if $D$ is an alternating diagram of $8_{14}$ there is an unknotting crossing for which $K_D=T_{3,5}$. There is a tongue move to a ten-crossing diagram $D'$ with $K_{D'}=T_{4,7}$ and a tongue move to a different ten-crossing diagram $D''$ for which $K_{D''}$ is a positive braid with braid index equal to six.

\begin{prop}\label{prop:tongue=bandsum}
If $(D',c')$ is obtained from $(D,c)$ by a tongue move, then $K_{D'}$ is obtained by taking a band sum of $K_D$ with another knot.
\end{prop}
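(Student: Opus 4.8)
The plan is to imitate the proof of Proposition~\ref{prop:twirleffect}: produce, for a tongue move, the analogue of Figure~\ref{Fig:twirl} that displays exactly how the dealternating arc is modified, and read off the resulting operation on $K_D$. Set $\widetilde D$ to be the almost alternating diagram of the unknot obtained from $D$ by changing $c$, with dealternating arc $\delta$, so that $K_D$ is the lift of $\delta$ in $\Sigma(\widetilde D)\cong S^3$. A tongue move is supported inside a ball $B_0$ and yields a diagram $\widetilde{D'}$ with a new dealternator $c'$; the first task is to pin down the new dealternating arc $\delta'$ explicitly, drawn relative to $\widetilde D$.

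The claim to be extracted from this picture is that, up to isotopy, $\delta'$ is obtained from $\delta$ by attaching a band $\beta$ that runs parallel to a strand leaving the dealternator and loops around the string of new crossings forming the tongue. Passing to the double branched cover and tracking the two sheets of the cover over $B_0$, one should see that this modification of $\delta$ lifts to the attachment to $K_D$ of an embedded band $b$ running out to a closed curve $J$ that is split from $K_D$, with $J$ determined by the tangle of new crossings forming the tongue. This exhibits $K_{D'}$ as the band sum of $K_D$ with $J$ along $b$. Since $b$ and $J$ are dictated by the particular tongue move, this also accounts for the phenomenon noted above that distinct tongue moves on a fixed diagram can give distinct knots (as with $8_{14}$).

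I expect the main obstacle to be the first step. In contrast with the twirl move, whose effect on the dealternating arc is immediate, the tongue move alters $\delta$ in a way that genuinely depends on the tongue, and one must carefully identify $\delta'$ — and, after lifting, verify that the extra feature it acquires is an \emph{embedded} band leading to a split summand $J$, rather than an arc linked with $K_D$ in a way that would make $K_{D'}$ more general than a band sum of two knots. Once the correct picture of $\delta'$ is available and the two sheets of the branched cover over $B_0$ are tracked carefully, the conclusion is immediate; it is obtaining and justifying that picture that constitutes the real content.
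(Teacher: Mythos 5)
Your proposal is correct and follows essentially the same route as the paper: the paper's proof simply exhibits the new dealternating arc (Figure~\ref{fig:BandSum}) as the old dealternating arc joined by a band to a second arc, and observes that lifting to the double branched cover realizes $K_{D'}$ as a band sum of $K_D$ (the lift of the old arc) with the lift of the second arc. The explicit picture of $\delta'$ that you identify as the real content is exactly what that figure supplies.
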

\begin{proof}
This follows from  Figure~\ref{fig:BandSum}. It is clear that the lift of the new dealternating arc is obtained by taking a band sum of knots obtained by lifting the blue and red arcs to the double branched cover. Note that the lift of the blue arc is $K_D$.
\end{proof}
\begin{figure}[ht]
\centering

\def\svgwidth{7cm}
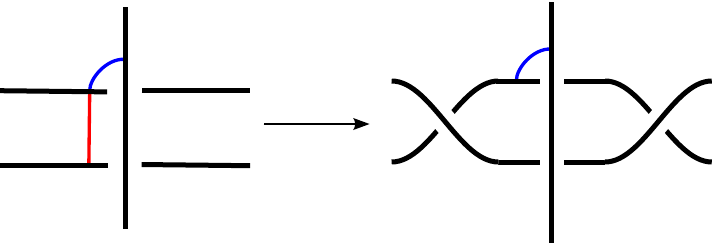
\caption{A tongue move corresponds to a band sum operation in $\mathcal D$.}
\label{fig:BandSum}

\end{figure}

\section{The geometry of knots in $\D$}\label{SatelliteClassification}
In this section we study which types of L-space knots arise in $\D$. Precisely, we show when $K_D$ is a torus knot, a satellite knot or a hyperbolic knot.

\subsection{Torus knots in $\D$}
We have already seen from Proposition~\ref{UnknotDistinguish} when the unknot arises in $\D$. We now generalize this to show that torus knots correspond to 2-bridge knots with unknotting number one.

\begin{prop}
The knot $K_D$ is a torus knot if and only if $D$ is a diagram of a 2-bridge knot.
\end{prop}
\begin{proof}
If $S_{d/2}^3(T_{r,s})\cong \Sigma(L)$ is the double branched cover of an alternating knot $L$, then $d=2rs\pm 1$ \cite{mccoy2014bounds}. In particular, $\Sigma(L)$ must be the lens space $L(2rs\pm 1, 2r^2)$ \cite{Moser71elementary}. Since the only knots in $S^3$ with lens space branched double covers are the 2-bridge knots~\cite[Corollary~4.12]{Hodgson1985}, this shows that if $K_D$ is a torus knot, then $D$ is a diagram of a 2-bridge knot.

Conversely, if $D$ is an alternating diagram of a 2-bridge knot, then there is a Conway sphere $C$ passing through the unknotting crossing, such that the both components of $D$ in $S^3\setminus C$ are rational tangles (cf. Figure~\ref{fig:2-bridgeimpliestorus}). Consider the diagram of the unknot obtained by changing the unknotting crossing in $C$. Since both sides of $C$ contain a rational tangle, we see that $C$ lifts to an unknotted torus in $S^3$ (it bounds a solid torus on both sides) upon taking the double branched cover. Since the unknotting arc in $D$ can be isotoped to lie on $C$, $K_D$ must lie on an unknotted torus in $S^3$. In particular, it is a torus knot, as required.
\end{proof}
\begin{rmk}
One can also appeal to the Cyclic Surgery Theorem \cite{Culler1987CyclicSurgery} and~\cite[Theorem~1]{Murakami1986} to show that a 2-bridge diagram yields a torus knot.
\end{rmk}

\begin{figure}[h]
\centering
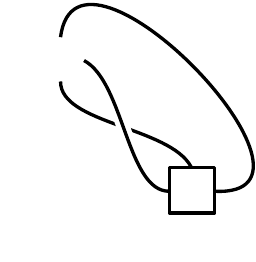
\caption{An unknotting number one 2-bridge diagram with a Conway sphere (marked in red) onto which the unknotting arc can be isotoped. Both $A$ and $B$ are rational.}
\label{fig:2-bridgeimpliestorus}
\end{figure}
{\rmk By exhibiting a 2-bridge diagram along with an unknotting crossing for each torus knot, it can be proved that all torus knots are in $\mathcal D$.}

\subsection{Constructing satellite knots in $\D$}
Now we turn our attention to the satellite knots in $\D$. In this section we give a general construction for producing alternating diagrams with an unknotting crossing for which the resulting knot in $\D$ is a satellite knot. For this construction we need the following definition.
\begin{defn}\label{UnknottingTangle}
Let $T$ be an alternating tangle which does not admit any Reidemeister~I moves reducing the crossing number. We say that $T$ is an {\em alternating unknotting tangle} if there is a distinguished crossing $c_T$ such that after changing $c_T$ we obtain a tangle that is isotopic, relative to the boundary, to a single crossing. Moreover, we require that the crossings connected by an arc to the boundary appear as in Figure~\ref{fig:unknottingtangle}.
\end{defn}
\begin{figure}[h]
\centering
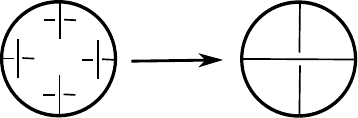
\caption{The left picture is a diagram of an alternating unknotting tangle showing only the crossings connected by an arc to the boundary. The right picture shows the tangle obtained by changing the distinguished crossing.}
\label{fig:unknottingtangle}
\end{figure}
Note that an alternating unknotting tangle $T$ is defined in such a way that if we replace the unknotting crossing of $(D,c)$ with $T$ to obtain a new alternating diagram, then this alternating diagram will be reduced and have unknotting number one with the distinguished crossing $c_T$ as an unknotting crossing. This observation allows us to make the following construction possible.

\begin{construct}\label{cons:satellite}
Let $D$ be a reduced alternating diagram with an unknotting crossing $c$, which is not a clasp diagram. Let $(T,c_T)$ be an alternating unknotting tangle with more than one crossing. Replace $c$ with $T$ to obtain an alternating diagram $(D',c_T)$ with unknotting number one.
\end{construct}
Note that the diagram resulting from this construction is always reduced. This is a consequence of the assumption that the original diagram $D$ is reduced and that the tangle $T$ cannot be simplified by any Reidemeister~I moves.
An example of this construction is given by the twirl move which replaces the unknotting crossing with an alternating unknotting tangle with four crossings. Proposition~\ref{prop:twirleffect} shows that the twirl move corresponds to a cabling operation in $\D$. Similarly Construction~\ref{cons:satellite} corresponds to taking satellites in $\D$.

\begin{lemma}
If $(D', c')$ is an alternating diagram with an unknotting crossing obtained by Construction~\ref{cons:satellite}, then $K_{D'}$ is a satellite knot.
\end{lemma}

\begin{proof}
Let  $(D', c')$ be obtained by Construction~\ref{cons:satellite} from an alternating diagram $D$ and an alternating unknotting tangle $(T, c_T)$. Recall that, by definition, the knot complement $S^3 \setminus \nu (K_{D'})$ is obtained by removing a ball containing the unknotting crossing $c_T$ and taking the double branched cover. Let $C$ be the Conway sphere bounding the alternating unknotting tangle $T$ in $D'$. The sphere $C$ lifts to a torus $R$ in the knot complement $S^3 \setminus \nu (K_{D'})$. On the side of $R$ that is branched over the exterior of $C$, we have the knot complement $S^3\setminus \nu(K_D)$. As we are assuming $D$ is not a clasp diagram, Lemma~\ref{UnknotDistinguish} shows that $K_D$ is not the unknot and, as a result, $S^3\setminus \nu(K_D)$ is not a solid torus: the unknot is the only knot whose complement is a solid torus. On the side of $R$ obtained by branching over the interior of $C$, we have $X_T$, the space  obtained by taking the branched double cover over $T$ after excising a ball containing $c_T$. Since changing $c_T$ in $T$ results in a tangle isotopic to one with a single crossing, $X_T$ is the complement of a knot $\kappa_T$ in $S^1 \times D^2$. Moreover, as we are considering $K_D$ and $K_{D'}$ as knots in $S^3$, the identification of $X_T$ as a knot complement in $S^1 \times D^2$ is such that the boundary of a disk in $S^1 \times D^2$ corresponds to a meridian of $K_D$ in $R$.

Thus $K_{D'}$ is obtained by a satellite operation with companion $K_D$ and pattern $\kappa_T$ (for some choice of longitude of $S^1 \times D^2$). It remains to show that the satellite operation is not trivial. That is, the torus $R$ is incompressible and not boundary-parallel in $S^3 \setminus \nu (K_{D'})$. Equivalently, we need to show that the knot $\kappa_T$ cannot be isotoped to lie in a ball in $S^1 \times D^2$ and is not isotopic to the core of $S^1 \times D^2$.

Consider the sequence of alternating diagrams constructed by iterating Construction~\ref{cons:satellite} with the tangle $T$. That is, let $D_0=D$ and for each $n$ let $D_n$ be the alternating diagram obtained by replacing the unknotting crossing in $D_{n-1}$ with a copy of $T$.

Since $T$ is assumed to contain more than just a single crossing, the crossing number of $D_n$ is monotonically increasing with $n$ and, by construction, all of these diagrams are reduced. Hence we have that $\det D_n \rightarrow \infty$ as $n\rightarrow \infty$. By combining the Montesinos trick~\cite{montesinos1973} with \cite[Theorem~1.1]{mccoy2014bounds} as in the proof of Proposition~\ref{prop:genus} we obtain the bound
\[\det D_n \le 8g(K_{D_n}) +6.\]
This shows that the genera $g(K_{D_n})$ are unbounded.

By the discussion from the start of the proof, $K_{D_n}$ is obtained by taking a satellite with companion $K_{D_{n-1}}$ and pattern $\kappa_T$. If $\kappa_T$ were isotopic to the core of $S^1 \times D^2$, then we would have $K_{D_{n}}= K_{D}$ for all $n$. If $\kappa_T$ could be isotoped to lie in a ball in $S^1\times D^2$, then for all $n\geq 1$ we have $K_{D_{n}}$ is isotopic to $\kappa_T$ considered as a knot in $S^3$. In either case the genera $g(K_{D_n})$ would be bounded, which we have already shown to be impossible. Thus $K_{D'}$ is a satellite knot as required.

\end{proof}
The following lemma will be useful for finding alternating unknotting tangles inside an alternating diagram with unknotting number one. For this lemma we need to know what it means for the exterior of a Conway sphere to be reduced.
\begin{defn}\label{defn:reducedtangle}
Given a reduced alternating diagram with unknotting crossing $c$ in the interior of a visible Conway sphere $C$, we say that the exterior of $C$ is reduced if there is no flype which decreases the number of crossings in the exterior of $C$.
\end{defn}
An example to illustrate Definition~\ref{defn:reducedtangle} is given in Figure~\ref{fig:maximaldiagram}.
\begin{figure}
  \centering
  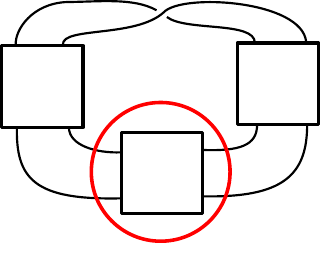
 \caption{A diagram with a Conway sphere which does not satisfy the conditions of Definition~\ref{defn:reducedtangle}. By twisting either $T_1$ or $T_2$, we can flype to move a crossing from the exterior of $C$ to the interior.}
 \label{fig:maximaldiagram}
\end{figure}
\begin{rmk} There is a more general notion of a reduced tangle due to  Thistlethwaite \cite[Definition~2.2]{Thistlethwaite1991}, which includes the definition we are using here.
\end{rmk}
\begin{lemma}\label{lem:maxitangles}
Let $D$ be an alternating diagram with unknotting crossing $c$ in the interior of a visible Conway sphere $C$. If the exterior of $C$ is reduced and contains at least one crossing, then the interior of $C$ is an alternating unknotting tangle with the distinguished crossing given by $c$.
\end{lemma}

\begin{proof}
As the details of this proof are somewhat technical, we give an overview of the strategy before we begin. We will prove the lemma by induction on the number of crossings in the interior of $C$. When the interior of $C$ contains only a single crossing there is nothing to prove. So we can assume that the interior of $C$ contains more than just the unknotting crossing. Let $D'$ be the almost-alternating diagram obtained by changing $c$. The inductive step will be achieved by finding an isotopy fixing the exterior of $C$ which produces a reduced almost-alternating diagram $D''$ with fewer crossings in the interior of $C$. By Theorem~\ref{thm:Tsukflypedtongue} there is a flyped tongue in $D'$. Let $E$ be a simple closed curve surrounding the flyped tongue which intersects the diagram in two points and two crossings as shown in Figure~\ref{fig:labelledtongue}. By carefully considering how $C$ can intersect with $D'$ and $E$, we will show that either $C$ is contained in one of the tangles $T_1$ or $T_2$ or $C$ contains $E$ in its interior. The details of this step take up the first two claims of the proof. Once we understand how $C$ sits inside $D'$ we produce the desired isotopy by flyping the tangles $T_1$ and $T_2$ before performing either an untongue move or an untwirl move. The details of this are contained in the final claim of the proof. We finish the proof by describing how this isotopy allows us to complete the inductive step.

First note that the conditions on $C$ guarantee that $D$ is not a clasp diagram: whenever we have a visible Conway sphere in a clasp diagram with at least one crossing in its exterior, we can always find a flype which carries one of the crossings into interior.

It is convenient to give the diagram $D'$ a checkerboard coloring. We may shade the diagram so that it appears as in Figure~\ref{fig:labelledtongue} near the flyped tongue.
\begin{figure}[h]
\centering
\def\svgwidth{7cm}
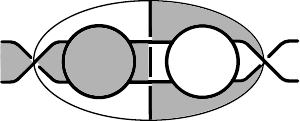
\caption{The flyped tongue in $D'$.}
\label{fig:labelledtongue}
\end{figure}

Observe that since $D'$ does not contain any nugatory crossings the four regions of $D'$ through which $E$ passes are all distinct.

Observe also that $C$ intersects four shaded regions of $D'$. Clearly, two of these regions must be white and two must be black. We see also that these four regions must all be distinct. If $C$ were to intersect the same region twice, then, up to choice of coloring, the exterior of $D'$ would appear as in Figure~\ref{fig:fourcolours}. As $D$ is alternating reduced with an unknotting crossing, it is a prime diagram, implying that both of the boxed tangles in Figure~\ref{fig:fourcolours} must be unknotted arcs. This would contradict the assumption that the exterior of $C$ contains at least one crossing.
\begin{figure}[h]
\centering
\def\svgwidth{3cm}
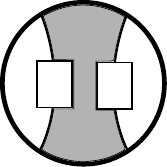
\caption{$C$ must intersect four distinct regions.}
\label{fig:fourcolours}
\end{figure}
\begin{claim}
There is a choice of $E$ which is disjoint from $C$.
\end{claim}
\begin{proof}[Proof of Claim]
Suppose that the intersection between $C$ and $ E$ is non-empty. The intersection points between $C$ and $E$ divides the intersection of $C$ with the plane of the diagram into a collection of embedded arcs $\gamma_1, \dots, \gamma_{2n}$ each disjoint from $E$ except at its endpoints.

First, let $\gamma_i$ be an arc which does not intersect $D'$. Such a $\gamma_i$ must lie within a single shaded region of $D'$. The end points of $\gamma_i$ cut $E$ into two arcs, one of which lies entirely in a single region of $D'$. If we replace an neighbourhood of the arc of $E$ lying entirely in one region with an appropriate push-off of $\gamma_i$, then we can obtain a new simple closed curve $E'$ containing the flyped tongue, but no longer intersecting $\gamma_i$ (see Figure~\ref{fig:modifyingE}). By iterating this construction, we are free to assume that every arc $\gamma_i$ intersects $D'$ at least once.
\begin{figure}[h]
\centering
\def\svgwidth{10cm}
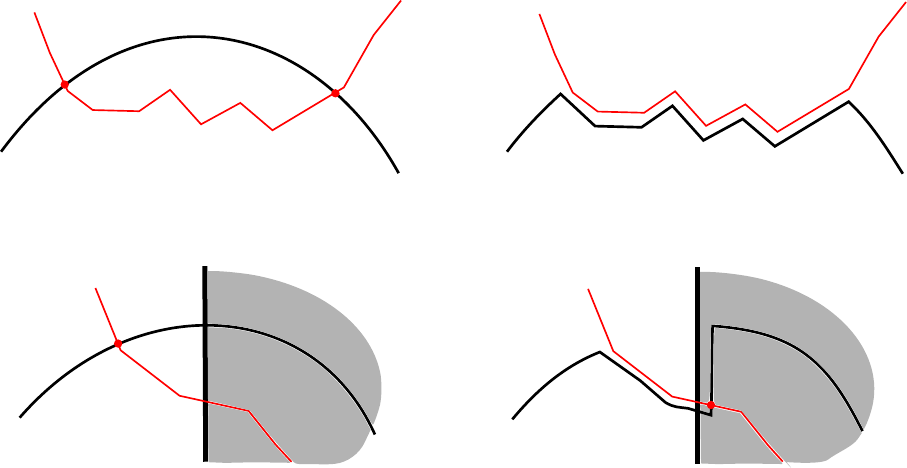
\caption{Modifying $E$ to eliminate arcs disjoint from $D'$ (top) and triangles (bottom).}
\label{fig:modifyingE}
\end{figure}
This implies that there is at least one intersection between $C$ and $D'$ that is not in the interior $E$, and, in particular, there are at most three points of intersections between $C$ and $D'$ in the interior of $E$.

Now suppose that $\gamma_i$ is an arc lying in the interior of $E$ which forms a triangle with $D'$ and $E$, i.e we have an arrangement as shown in the second row of Figure~\ref{fig:modifyingE}. We can obtain a new curve $E'$ containing the flyped tongue by replacing the segment of $E$ forming a side of the triangle with an appropriate pushed-off copy of the other two sides, as shown in Figure~\ref{fig:modifyingE}. Note that no arc $\gamma_j$ passes through the side of this triangle given by $D'$, as this would violate the condition that $C$ intersects four distinct shaded regions. Thus, $E'$ does not intersect $C$ in any more points than $E$ did. Note that if $\gamma_i$ intersected $D'$ only once, then it will not intersect $D'$ in the interior of $E'$. In this case we may modify $E'$ as before to obtain $E''$ for which $\gamma_i$ is disjoint from the interior.
By iterating this construction we may assume that $E$ is chosen so that every $\gamma_i$ intersects $D'$ at least once and that $\gamma_i$, $E$ and $D$ does not form a triangle in the interior of $E$.

This `no triangles' condition implies that every arc $\gamma_i$ in the interior of $E$ intersects $D'$ in at least two points. Thus we can conclude that there are precisely two arcs, $\gamma_1$ and $\gamma_2$, and that the arc in the interior of $E$, which we will choose to be $\gamma_1$ intersects $D'$ in two or three points. By the symmetry of the the flyped tongue we may assume that $\gamma_1$ intersect one of the white regions.

Altogether this leaves a highly constrained number of possibilities. It turns out that $\gamma_1$ is essentially determined by the regions where it meets $E$. This leads to the five possibilities which we display in Figures~\ref{fig:gamma1}$(i)$-$(v)$. Note that in Figures~\ref{fig:gamma1}$(ii)$-$(v)$ we are using the primality of $D'$ to allow us to draw $\gamma_1$ as not passing through either of the tangles $T_1$ or $T_2$. Four of these possibilities, the ones in Figures~\ref{fig:gamma1}$(i)$-$(iv)$, can immediately discounted as the exterior of $C$ is reduced. For the remaining possibility, Figure~\ref{fig:gamma1}$(v)$, the arc $\gamma_2$ forms an arc connecting the endpoints of $\gamma_1$ and intersecting $D'$ once. Using again the primality of $D'$ to show tangle contained between $E$ and $\gamma_2$ be unknotted, this must appear as depicted in Figure~\ref{fig:gamma1}$(v)$. As we are assuming that the interior of $C$ contains more than just a single crossing, this possibility can also be ruled out. This completes the proof of the claim.

\begin{figure}[h]
\centering
\def\svgwidth{10cm}
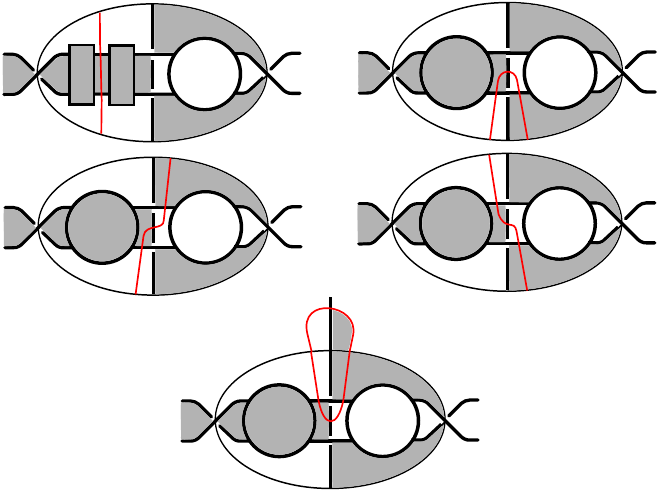
\caption{The five possibilities for $\gamma_1$. Figure~$(v)$ also shows $\gamma_2$.}
\label{fig:gamma1}
\end{figure}
\end{proof}

Assume now that $E$ is disjoint from $C$.
\begin{claim}
If $C$ is contained in the interior of $E$, then it is contained in one of the tangles $T_1$ or $T_2$.
\end{claim}
\begin{proof}[Proof of Claim.]
Suppose that $C$ is contained entirely in the interior of $E$.
If $C$ does not intersect the strand of $D'$ running vertically through the centre of the interior of $E$, then it can clearly be assumed to lie in the interior of $T_1$ or $T_2$. Now consider how $C$ can intersect this vertical strand. Since $C$ must pass through four distinct regions we see that a crossing must lie between any two intersection points. This implies that $C$ can intersect this vertical strand only twice. Furthermore, as $C$ intersects $D'$ in only four points, one of these intersection points must lie between the two crossings with the other lying between one of the crossings and $E$. By considering how these intersection points can be joined up and using the primality of $D'$, this implies that $C$ is a Conway sphere enclosing one of the two crossings on this central strand. In one case, $C$ contains a single crossing in its interior and in the other, the exterior of $C$ is not reduced. Thus we can rule out $C$ intersecting this central strand.
\end{proof}
We now produce the desired isotopy.
\begin{claim}
There is an isotopy, fixing the exterior of $C$, which carries $D'$ to a reduced almost-alternating diagram $D''$ which reduces the number of crossings in the interior of $C$. Moreover, this isotopy can be obtained by flypes followed by an untongue or untwirl move.
\end{claim}
\begin{proof}[Proof of Claim.] By the two preceding claims, we may assume that $E$ is contained in the interior of $C$ or $C$ is contained in one of the tangles $T_1$ or $T_2$. In any case, this means there are flypes fixing the exterior of $C$ to a diagram which admits an untongue move in the interior of $C$. By performing this untongue move we obtain an almost-alternating diagram of the unknot $D''$ with fewer crossings in the interior of $C$. If this new diagram is reduced, then there is nothing further to check. So it remains to consider the ways in which the untongue move could introduce a nugatory crossing in $D''$. Consider the shadings of this diagram before and after the tongue move as shown in Figure~\ref{fig:flypedtonguereduction}. Note that a crossing is nugatory if and only if it is not incident to four distinct shaded regions. As the colourings are unchanged by the untongue move outside of the depicted region, the only crossings which can be nugatory are the two visible in Figure~\ref{fig:flypedtonguereduction}.
Now observe that the dealternator in $D''$ cannot be nugatory. If it were, then the alternating diagram obtained by changing it would also be unknotted. As every crossing is nugatory in an alternating diagram of the unknot \cite{BankwitzNug}, this would imply that every crossing in the exterior of $C$ is nugatory.

Thus, using the labels for regions as shown in Figure~\ref{fig:flypedtonguereduction}, we see the untongue move creates a nugatory crossing only if the the region labelled as $a$ is the same as $c$ or if $b$ is the same as $d$. By the symmetry of the diagram we may assume that $a$ and $c$ are the same. This means $D''$ must appear as in Figure~\ref{fig:containsnugatory} around region $b$. As $D'$ is prime, however, we see that the boxed tangle Figure~\ref{fig:containsnugatory} can contain no crossings. Thus, the nugatory crossing can be removed by Reidemeister~I move. Note that this untongue move followed by a Reidemeister~I move is precisely an untwirl move. Since $C$ must intersect four distinct shaded regions, $C$ must not pass through region $b$. Thus, region $b$ must lie entirely in the interior of $C$. This means that the Reidemeister~I move leaves the exterior of $C$ fixed.
\begin{figure}[h]
\centering
\def\svgwidth{10cm}
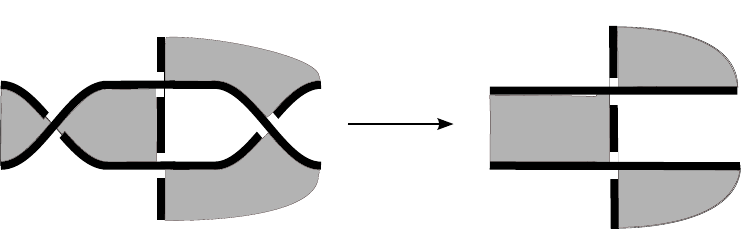
\caption{Checkerboard colourings before and after a tongue move.}
\label{fig:flypedtonguereduction}
\end{figure}
\begin{figure}
  \centering
  \def\svgwidth{4cm}
  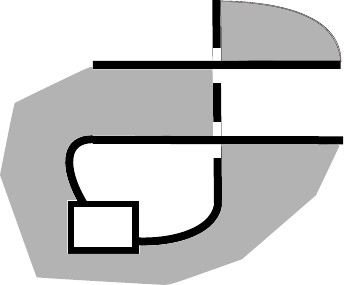
 \caption{The local picture when $D''$ contains a nugatory crossing.}
 \label{fig:containsnugatory}
\end{figure}
\end{proof}
We are now ready to finish the inductive proof. By the last claim there is an isotopy, fixing the exterior of $C$, to a reduced almost-alternating diagram $D''$ with fewer crossings in the interior of $C$. Let $\widetilde{D}$ be the alternating diagram obtained by changing the dealternator of $D''$. Since $C$ has the same exterior in both $D$ and $\widetilde{D}$, we see that $\widetilde{D}$ satisfies the conditions of the lemma. As $\widetilde{D}$ has fewer crossings in the interior of $C$, we may assume by the inductive hypothesis that the interior of $C$ is an alternating unknotting tangle (with distinguished crossing corresponding to the dealternator in $D''$). Since there is an isotopy between the interiors of $C$ in $D'$ and $D''$, this shows that interior of $C$ in $D$ is also an alternating unknotting tangle.
\end{proof}

The final claim in the proof of Lemma~\ref{lem:maxitangles} shows that if the exterior of a Conway sphere in $D$ is reduced then the interior is an alternating unknotting tangle which can be built up from a single crossing by a sequence of flypes, tongue moves and twirl moves.
As any alternating unknotting tangle can be inserted into an alternating diagram so that its exterior is reduced,
this shows that an analogue of Theorem~\ref{thm:altTsuka} holds for alternating unknotting tangles.
\begin{figure}
  \centering
  \def\svgwidth{10cm}
  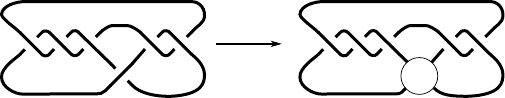
 \caption{Inserting an alternating unknotting tangle to obtain a new alternating diagram with unknotting number one. Note that the exterior of $T$ is reduced.}
 \label{fig:tsuktangleproof}
\end{figure}

\begin{cor}
Any alternating unknotting tangle can be built up from a single crossing by a sequence of flypes, tongue moves and twirl moves.
\end{cor}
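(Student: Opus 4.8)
The plan is to run exactly the induction that derives Theorem~\ref{thm:altTsuka} from Theorem~\ref{thm:Tsukflypedtongue}, but carried out inside a tangle. Let $(T,c_T)$ be a reduced alternating unknotting tangle. If $T$ consists of a single crossing there is nothing to prove, so assume $T$ has more than one crossing and induct on the number of crossings of $T$.

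First I would pass to the almost alternating picture. Insert $T$ in place of the unknotting crossing of a suitable alternating diagram with unknotting number one (for instance a clasp diagram, as in Figure~\ref{fig:tsuktangleproof}) to obtain a reduced alternating diagram $(D',c_T)$ with unknotting number one, chosen so that the Conway sphere $C$ bounding $T$ is maximal in the sense of Lemma~\ref{lem:maxitangles}. As in the paragraph following that lemma, changing $c_T$ in $D'$ yields an almost alternating diagram of the unknot which is not a clasp diagram (since $T$ has more than one crossing and $C$ is maximal), so Theorem~\ref{thm:Tsukflypedtongue} produces a flyped tongue, and the maximality of $C$ forces this flyped tongue to lie in the interior of $C$, i.e.\ inside the tangle obtained from $T$ by changing $c_T$.

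Next I would reduce. Twisting this flyped tongue as in Figure~\ref{fig:flypedtonguereduction} gives an almost alternating diagram with strictly fewer crossings in the interior of $C$; performing Reidemeister~I moves if necessary — none of which can destroy the dealternating crossing, since that would produce a reduced alternating diagram of the unknot — produces a reduced diagram whose interior tangle $T''$ is again obtained by changing the distinguished crossing of a reduced alternating unknotting tangle with fewer crossings, and whose exterior of $C$ is unchanged. By the inductive hypothesis this smaller tangle is built up from a single crossing by flypes, tongue moves and twirl moves. The final point is that the inverse of the flyped-tongue reduction just performed is, up to flypes, precisely a tongue move or a twirl move — the two cases occurring according to whether the flyped tongue meets the rest of the tangle along one or two of its outermost crossings, exactly as in the passage from Theorem~\ref{thm:Tsukflypedtongue} to Theorem~\ref{thm:altTsuka}. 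Appending this move to the sequence for $T''$ gives the desired sequence for $T$.

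I expect the main obstacle to be the bookkeeping in that last step: verifying that when the flyped-tongue reduction occurs near $\partial T$ it still appears as one of the three allowed moves and does not disturb the prescribed form of the crossings attached to $\partial T$ required by Definition~\ref{UnknottingTangle}, and confirming that the distinguished crossing is tracked correctly through the intervening flypes. These are precisely the delicate points in Tsukamoto's original argument; the only genuinely new feature here is that every move must respect the fixed tangle boundary, which is exactly what the maximality of $C$ in Lemma~\ref{lem:maxitangles} guarantees.
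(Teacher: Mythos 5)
Your argument is exactly the paper's: the authors also insert $T$ into a diagram as in Figure~\ref{fig:tsuktangleproof} so that the Conway sphere bounding $T$ satisfies the maximality hypothesis of Lemma~\ref{lem:maxitangles}, invoke the proof of that lemma to locate a flyped tongue inside the tangle after changing the distinguished crossing, and then run the same induction by which Theorem~\ref{thm:altTsuka} follows from Theorem~\ref{thm:Tsukflypedtongue}. Your write-up in fact spells out the inductive step and the identification of the inverse reduction as a tongue or twirl move more explicitly than the paper does.
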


\subsection{Substantial Conway spheres}
A satellite knot $K_D$ in $\D$ generated by Construction~\ref{cons:satellite} corresponds to an alternating knot diagram $(D, c)$ containing a Conway sphere. The goal of this subsection is to detect when the converse is true, that is, for what types of Conway spheres in $(D, c)$ the knot $K_D$ is a satellite knot. Recall from Section~\ref{sec:intro} that a Conway sphere $C$ in $(D,c)$ is called substantial if it is visible, the interior of $C$ (i.e. the component of $S^3\setminus C$ containing $c$) contains more than one crossing and the exterior of $C$ is not a rational tangle.

\begin{prop}\label{prop:substantialimpliessatellite}
Let $(D,c)$ be an alternating diagram with an unknotting crossing $c$ containing a substantial Conway sphere. Then the corresponding knot $K_D$ in $\D$ is a satellite knot with companion also in $\D$.
\end{prop}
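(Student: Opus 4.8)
The plan is to reduce the statement to Lemma~\ref{lem:maxitangles} combined with the satellite construction of this section. Let $C\subset D$ be the given substantial Conway sphere, so that $C$ is visible, its interior contains $c$ and more than one crossing, and its exterior is not a rational tangle; in particular the exterior contains at least one crossing, since a crossingless tangle is rational.

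First I would arrange the maximality hypothesis of Lemma~\ref{lem:maxitangles}. Repeatedly apply a flype pushing a crossing from the exterior of $C$ into the interior, as in Figure~\ref{fig:maximaldiagram}; each such flype strictly increases the number of crossings inside $C$, a quantity bounded by the (fixed) crossing number of $D$, so after finitely many steps we reach a reduced alternating diagram, still denoted $(D,c)$, in which $C$ is maximal (no flype increases the number of crossings in its interior). Since each of these flypes may be taken to fix $c$ --- which lies deep inside $C$, away from the flyped subtangle --- the remark that flypes fixing the unknotting crossing leave $K_D$ unchanged shows $K_D$ is unaffected, and $C$ remains visible with $c$ and more than one crossing in its interior. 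The delicate point is that the exterior of $C$ stays non-rational: a flype of this kind replaces the exterior tangle, which has the form $x\ast S$ for a boundary crossing $x$ adjacent to $C$ and a subtangle $S$, by a tangle isotopic to $S$; since adjoining or deleting a single crossing at the boundary of a tangle preserves rationality (one undoes it with the opposite crossing), non-rationality of the exterior is preserved at every step. Hence in the maximal configuration the exterior is still non-rational, and in particular contains a crossing.

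Now Lemma~\ref{lem:maxitangles} applies to $(D,c)$ and $C$: the interior of $C$ is an alternating unknotting tangle $T$ with distinguished crossing $c$, and $D$ is obtained by Construction~\ref{cons:satellite} from the smaller diagram $D_0$ gotten by replacing $T$ by a single crossing --- genuinely smaller because $T$ has more than one crossing. It remains to see that $D_0$ is not a clasp diagram, so that the lemma following Construction~\ref{cons:satellite} applies. If $D_0$ were a clasp diagram, then by the proof of Proposition~\ref{UnknotDistinguish} (Figure~\ref{fig:clasp}) excising a ball around its unknotting crossing yields a rational tangle; but in $D_0$ that crossing is the single crossing inside $C$, and between it and $C$ the diagram is four trivial strands, so the exterior of $C$ in $D_0$ --- which is literally the exterior of $C$ in $D$ --- would be isotopic to a rational tangle, contradicting non-rationality.

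Therefore $D_0$ is not a clasp diagram, and the lemma after Construction~\ref{cons:satellite} yields that $K_D$ is a non-trivial satellite knot with companion $K_{D_0}\in\D$ (replacing $D_0$ by a reduced alternating diagram of the same knot type if needed). This proves the proposition. I expect the main obstacle to be the bookkeeping in the first step: checking that the flypes needed to reach the maximality hypothesis of Lemma~\ref{lem:maxitangles} can be chosen to fix $c$ and, crucially, to preserve non-rationality of the exterior --- the very ``substantial'' hypothesis that must survive the reduction.
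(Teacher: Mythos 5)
Your proof is correct and follows essentially the same route as the paper's: flype to achieve the maximality hypothesis of Lemma~\ref{lem:maxitangles}, then invoke Construction~\ref{cons:satellite} and the lemma following it to identify $K_D$ as a satellite with companion the knot of the smaller diagram. You additionally verify two points the paper leaves implicit---that non-rationality of the exterior survives the flypes, and that the resulting base diagram is not a clasp diagram (so the companion is non-trivial)---both of which are genuinely needed and correctly handled.
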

\begin{proof}
Let $C$ be the substantial Conway sphere in $D$. We may perform a sequence of flype moves on $D$ to obtain a diagram $D'$ that maximizes the number of crossings in the interior of $C$. Since the exterior of $C$ in $D$ is not rational, the exterior of $C$ in $D'$ will also not be rational. In particular, this $D'$ satisfies the hypotheses of Lemma~\ref{lem:maxitangles}. It follows that $D'$ arises by Construction~\ref{cons:satellite}. Consequently, $K_D=K_{D'}$ is a satellite in $\D$.
\end{proof}

\begin{lemma}\label{lem:satelliteimpliesconway}
Let $K_D\in\D$ arise from the diagram $(D, c)$. If $K_D$ is a satellite knot, then $D$ contains a Conway sphere $C$ satisfying either
 \begin{enumerate}[(1)]
 \item $C$ is substantial or
 \item $C$ is hidden and, moreover, the tangle in the exterior of $C$ is not rational.
 \end{enumerate}
\end{lemma}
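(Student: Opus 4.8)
The plan is to use the companion torus in $S^3\setminus\nu(K_D)$ and push it down to the diagram via the branched double cover. Since $K_D$ is a satellite, its exterior contains an incompressible, non-boundary-parallel torus $R$. Recall from the remark after Theorem~\ref{thm:Tsukflypedtongue} that $K_D$ is the lift of the dealternating arc in the almost alternating diagram $\widetilde D$ of the unknot obtained by changing $c$; equivalently, $S^3\setminus\nu(K_D)$ is the branched double cover of $(B^3,\text{arcs})$ obtained by removing a ball around $c$ from $(D,c)$. First I would invoke the equivariance of $R$ under the covering involution $\tau$ on $S^3$: by the equivariant torus theorem (Holzmann, or the relevant work of Meeks--Scott / Edmonds on finite group actions on Haken manifolds), after an isotopy $R$ can be taken $\tau$-invariant. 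Then $R/\tau$ is either a Conway sphere $C$ in $S^3$ meeting the branch set (the diagram $D$ together with the lifted unknotting arc) in four points, or an annulus/torus disjoint from the branch set; the incompressibility and non-boundary-parallel hypotheses rule out the cases that would make $R$ compressible or boundary-parallel upstairs, leaving $C$ a genuine Conway sphere for $D$ that is disjoint from the unknotting arc. So the topological input gives us \emph{some} Conway sphere $C$ in $D$.

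The bulk of the work is then to show $C$ can be taken either substantial or hidden-with-non-rational-exterior, i.e.\ to rule out all the degenerate configurations. Here I would argue by a minimality/isotopy argument at the level of diagrams, combined with the trichotomy: $C$ is visible or hidden. If $C$ is visible, I need to arrange that the exterior is not rational and the interior contains more than one crossing. If the exterior of $C$ were rational, then on the downstairs side the complement of $\nu(K_D)$ on the far side of $R$ would be a solid torus glued along $R$, which would force $R$ to be compressible or boundary-parallel — contradicting the hypotheses. (This is exactly the content used in Lemma~\ref{lem:maxitangles} and the Claim in its proof: a rational tangle lifts to a solid torus.) Similarly, if the interior of $C$ contained only the unknotting crossing $c$, then the interior is a rational tangle disjoint from — actually containing — only $c$, and again $R$ would be boundary-parallel; so the interior has more than one crossing, giving case (1). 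If instead $C$ is hidden, I run the same argument: the exterior being rational again forces $R$ compressible or boundary-parallel, so the exterior is not rational, giving case (2). In both cases one must also check that $C$ being disjoint from the unknotting arc is compatible with the definition of ``interior'' (the component containing $c$), which is automatic once $R$ has been made $\tau$-invariant.

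The main obstacle I expect is the equivariant positioning step: making the companion torus $R$ invariant under the involution and then controlling exactly how its quotient $C$ sits relative to the plane of the diagram (visible versus hidden) and relative to the unknotting arc. One has to be careful that the incompressible torus upstairs does not project to something that merely \emph{looks} like a Conway sphere but is inessential in a way the hypotheses cannot exclude; ruling this out requires tracking how compressions and boundary-parallelisms downstairs lift, and invoking that the two sides of $C$ are rational tangles precisely when the corresponding sides of $R$ are solid tori. A secondary subtlety is that a hidden Conway sphere need not come from a visible one after flyping, which is why case (2) must be stated separately rather than folded into ``substantial''; handling the hidden case is where one genuinely needs the non-rationality of the exterior as the surviving obstruction, mirroring the role it plays in Proposition~\ref{prop:substantialimpliessatellite}.
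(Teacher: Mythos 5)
Your overall strategy is the paper's: produce an equivariant essential torus in $S^3\setminus\nu(K_D)$ via Holzmann's equivariant torus theorem, quotient by the covering involution to get a Conway sphere, and then use incompressibility and non-boundary-parallelism of the torus to force the exterior of the sphere to be non-rational and (in the visible case) the interior to contain more than one crossing. That last part of your argument matches the paper essentially verbatim. However, there are two genuine gaps in the first half.

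First, the equivariant torus theorem does not let you isotope your \emph{given} companion torus to be $\tau$-invariant; it only produces \emph{some} two-sided incompressible torus $R$ with $\iota(R)=R$ or $\iota(R)\cap R=\emptyset$, and this new torus could a priori be boundary-parallel. The paper handles this by applying Holzmann's theorem not just to the knot exterior but to Dehn fillings of it in which the original companion torus stays incompressible, which is how one arranges that $R$ is not boundary-parallel. Second, and more importantly, you do not actually rule out the case $\iota(R)\cap R=\emptyset$, in which the quotient is a torus disjoint from the branch set rather than a Conway sphere. Your claim that ``incompressibility and non-boundary-parallel hypotheses rule out'' this case has it backwards: precisely because $R$ is incompressible and not boundary-parallel upstairs, the quotient torus would be an essential, non-boundary-parallel torus in the exterior of the alternating knot $D$ downstairs. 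What kills this case is Menasco's theorem that prime alternating knots are not satellite knots, so their exteriors contain no such torus; this input is missing from your argument and cannot be replaced by properties of $R$ alone. With those two repairs (the Dehn-filling trick and the appeal to Menasco), your outline coincides with the paper's proof.
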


{\rmk \label{rmk:StrongInversion}By construction, $K_D$ is strongly invertible, that is, there is an involution on the knot exterior $S^3\setminus \nu(K_D)$ that fixes a pair of arcs meeting the boundary torus transversally in four points. 
}
\\

Lemma~\ref{lem:satelliteimpliesconway} is proven by first finding an incompressible, non-boundary parallel torus in the exterior of $K_D$ (not necessarily the one we get from the assumption that $K_D$ is a satellite knot). Given the strong inversion of $S^3\setminus \nu(K_D)$, we quotient the torus to get a Conway sphere $C$. By carefully exploring $C$ we see that the only possibilities are the ones stated in the lemma. For the first part of the argument (i.e. finding the torus), we appeal to the following theorem that we state without proof. The theorem follows directly from \cite[Corollary~4.6]{Holzmann91Equivariant}.

{\thm [Holzmann]\label{thm:Holzmann}
Let $M$ be an orientable, irreducible three-manifold with an involution~$\iota$. Suppose that $M$ contains an incompressible torus, and that $M$ is not an orientable Seifert fiber space over the 2-sphere with four exceptional fibers. Then there is an incompressible torus $R\subset \mathring{M}$, transverse to the fix point set of $\iota$, with either $\iota(R)\cap R = \emptyset$ or $\iota(R) = R$.
}
\begin{proof}[Proof of Lemma~\ref{lem:satelliteimpliesconway}]
Let $\widehat{D}\subset S^3\setminus (\mathring{B^{3}})$ be the diagram obtained by cutting out a small ball containing the unknotting crossing $c$. The double branched cover over $\widehat{D}$ is the knot exterior $S^3\setminus \nu (K_D)$. Since $K_D$ is a satellite knot, $S^3\setminus \nu (K_D)$ contains an incompressible torus $R'$. If $\iota$ is the covering involution on $S^3\setminus \nu (K_D)$ (c.f. Remark~\ref{rmk:StrongInversion}), then, using Theorem~\ref{thm:Holzmann}, we get that $S^3\setminus \nu (K_D)$ contains an incompressible torus $R$, transverse to the fixed set of $\iota$, such that either $\iota(R)=R$ or $\iota(R)\cap R=\emptyset$. Moreover, since we can also apply Theorem~\ref{thm:Holzmann} to any non-integer Dehn filling of $S^3\setminus \nu(K_D)$ and, in particular, all those in which $R'$ remains incompressible, we can assume that $R$ is not boundary parallel. Here we are using that a non-integer surgery on satellite knot is not an orientable Seifert fiber space over the 2-sphere with four exceptional fibers  \cite[Corollary~1.3]{Miyazaki}. An Euler characteristic argument shows that the quotient of $R$ by the involution is either a torus disjoint from $\widehat{D}$ or a Conway sphere (corresponding to either $\iota(R)\cap R=\emptyset$ or $\iota(R)=R$, respectively). We remind the reader that prime alternating knots are not satellites knots \cite[Corollary~1]{Menasco84incompressible}, and in particular, the exterior of $D$ cannot contain an incompressible non-boundary parallel torus. Thus the possibility that the quotient of $R$ be a torus does not occur. Therefore, we get a Conway sphere $C$ in $(D,c)$ which does not intersect the unknotting arc specified by $c$. Since $R$ is incompressible in $S^3\setminus \nu (K_D)$, we see that the exterior of $C$ cannot be a rational tangle. Thus if $C$ is hidden, we see that $(2)$ of the lemma holds. If $C$ is visible, then the interior of $C$ must contain more than one crossing, as otherwise $R$ would have been boundary parallel in $S^3\setminus \nu (K_D)$ contradicting the assumption that $K_D$ is a satellite knot. This shows that if $C$ is visible, it must be substantial.
\end{proof}

\begin{lemma}\label{lem:hiddensphereanalysis}
Suppose that $(D,c)$ contains a hidden Conway sphere disjoint from the unknotting arc specified by $c$ and whose exterior is not rational. Then $D$ contains
a substantial Conway sphere.
\end{lemma}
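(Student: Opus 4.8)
The plan is to take the hidden Conway sphere $C$, isotope it into a position that is taut with respect to the projection plane $P$, and then analyze the curves of $C\cap P$ --- either pulling out a visible, substantial Conway sphere or showing the structure is rigid enough to force $D$ to be a $2$-bridge diagram. First I would record two reductions. The diagram $D$ is prime: the unknotting crossing $c$ lies in a single summand of any connected-sum decomposition, so changing $c$ can recover the unknot only if every other summand is trivial, and $D$ is nontrivial since it has unknotting number one. Also $D$ is not a clasp diagram, by Proposition~\ref{UnknotDistinguish}, since the exterior of $C$ is non-rational and hence nontrivial. Now isotope $C$ to minimize $|C\cap P|$, breaking ties by minimizing the number of crossings of $D$ caught between $C$ and $P$; as $C$ is hidden, $C\cap P$ has at least two components.

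The core of the argument is an innermost-disk analysis that uses primeness of $D$. A curve of $C\cap P$ that is innermost on $C$ bounds a disk $E\subset C$; since there are at least two disjoint such disks and $C\cap D$ has only four points, some innermost $E$ meets $D$ in at most two points. For a prime alternating diagram, a $2$-sphere meeting $D$ in at most two points bounds a ball meeting $D$ in a single crossingless strand (Menasco's ``visible primeness'' \cite{Menasco84incompressible}, which is also what drives Theorem~\ref{thm:Tsukflypedtongue}), so in principle one can slide the corresponding part of $C$ across that ball and reduce $|C\cap P|$. This naive slide need not respect $C\cap D$, however --- it can drag an arc of $D$ across $P$ --- and disentangling this is exactly where the real work lies: one has to account for the other curves of $C\cap P$ nested inside $E$ (in $P$), keep the unknotting arc on a fixed side throughout (it can be isotoped into $P$ near $c$), and argue that in each local configuration forced by alternation one either genuinely reduces a complexity, reads off a visible Conway sphere as $E'\cup(\text{a disk in }P)$ for an innermost four-point disk $E'$, or discovers that the exterior of $C$ is rational, contrary to hypothesis. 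Assuming this analysis goes through, we end up with a \emph{visible} Conway sphere $S$, disjoint from the unknotting arc, whose exterior tangle is non-rational.

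It remains to deal with the visible sphere $S$. If the interior of $S$ (its $c$-side) contains more than one crossing, then $S$ is substantial by definition and we are done. Otherwise the interior of $S$ is a single crossing, so $D$ is built from the non-rational exterior tangle of $S$ by inserting one crossing at $c$; after using flypes and Lemma~\ref{lem:maxitangles} to push as many crossings as possible to the interior side and then iterating the whole argument on the smaller pieces, the induction can fail to produce a substantial sphere only if $D$ admits no visible substantial Conway sphere whatsoever, and I would argue that a reduced alternating diagram with that property which still carries a hidden Conway sphere with non-rational exterior must be a $2$-bridge diagram. The double branched cover gives a sanity check here: a $2$-bridge knot exterior contains no closed incompressible surface, so the torus covering $C$ must be compressible, which forces a rational tangle on one side of $C$ --- precisely the degeneration we are claiming. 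I expect the innermost-disk bookkeeping of the second paragraph to be the main obstacle: controlling the mutual nesting (on $C$ and on $P$) of the curves of $C\cap P$ relative to the four points of $C\cap D$, and showing each simplification can be realized without changing the intersection with $D$ except in controlled ways. This is a figure-heavy case analysis in the spirit of Thistlethwaite's treatment of hidden Conway spheres in alternating diagrams \cite{Thistlethwaite1991}, now carrying along the unknotting arc and the non-rationality of the exterior tangle.
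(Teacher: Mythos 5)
There is a genuine gap: the entire combinatorial core of your argument is deferred with ``assuming this analysis goes through.'' The innermost-disk analysis of $C\cap P$ that you sketch in your second paragraph --- controlling the nesting of curves relative to the four points of $C\cap D$, sliding pieces of $C$ across crossingless balls, and extracting a visible Conway sphere --- is precisely the content of a theorem of Menasco that the paper simply quotes: an alternating diagram containing a hidden Conway sphere must appear in a rigid standard form consisting of four tangles $X,Y,Z,W$ arranged in a ring (Figure~\ref{fig:hiddenconway}(a); see \cite{Menasco84incompressible} and \cite[Figure~3(i)]{Thistlethwaite1991}). You gesture at this literature as ``in the spirit of'' your case analysis, but you neither carry out the analysis nor invoke the structural theorem in the precise form needed, so the step from ``hidden Conway sphere'' to ``visible sphere $S$ with non-rational exterior'' is unproven in your write-up. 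Once the standard form is in hand, the paper's argument is short: if any of $Y,Z,W$ is non-rational its visible sphere is substantial; otherwise non-rationality of the exterior of $C$ forces $Z$ and $W$ to be non-trivial, so the visible sphere around the tangle $X$ containing $c$ has non-rational exterior, and it is substantial unless $X$ is a single crossing.

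Your endgame for the single-crossing case is also too vague to count as a proof. You propose an induction via flypes and Lemma~\ref{lem:maxitangles} and then ``would argue'' that a diagram with no substantial sphere but a hidden one with non-rational exterior is $2$-bridge; the double-branched-cover remark you offer is only a consistency check (it assumes the $2$-bridge conclusion to deduce a compression, rather than deriving the conclusion). The paper's argument here is concrete and purely diagrammatic: when $X=\{c\}$, changing $c$ produces a visibly non-prime diagram of the unknot (Figure~\ref{fig:hiddenconway}(d)), so every summand is unknotted; the summand containing $Y$ is alternating, hence $Y$ is the trivial tangle, and with $Z,W$ rational the original diagram is a $2$-bridge diagram. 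You would need to supply an argument of comparable precision at this step.
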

\begin{proof}
Menasco, in \cite[Theorem~3]{Menasco84incompressible}, shows that if an alternating diagram $D$ contains a hidden Conway sphere, $D$ must appear as in Figure~\ref{fig:hiddenconway}(a). Moreover there is an isotopy of $D$ into a non-alternating knot diagram, as depicted in Figure~\ref{fig:hiddenconway}(b), in which the image of the hidden Conway sphere is visible. See \cite[Figures~3(ii) and 3(iii)]{Thistlethwaite1991}. We may assume that the unknotting crossing $c$ is contained in the tangle $X$. Observe that if any of $Y$, $Z$ or $W$ is not rational, then the visible Conway sphere containing it is a substantial one. Therefore, we may assume that the tangles $Y$, $Z$ and $W$ are all rational.

Now consider Figure~\ref{fig:hiddenconway}(b).
The image of the hidden Conway sphere in the figure is the visible Conway sphere surrounding $X$ and $Y$.
 Since the tangle contained in the exterior of the Conway sphere is not rational, we see that both $Z$ and $W$ contain crossings. It follows from the results of \cite[Section~4]{Kauffman2004}, that in any alternating diagram of a rational tangle at least one pair of arcs emerging from the boundary sphere must meet in a crossing. Thus, if we consider again Figure~\ref{fig:hiddenconway}(a), we see that the exterior of the visible Conway sphere containing $X$ is not rational.  Thus if $X$ contains more than one crossing, the visible Conway sphere surrounding $X$ is substantial. It remains to consider the case where $X$ consists of a single crossing. We will show that in this case $D$ is a diagram of a 2-bridge knot, and argue that this will not happen.

If $X$ only consists of $c$, then the diagram appears as in Figure~\ref{fig:hiddenconway}(c). After changing the unknotting crossing there is an isotopy to the non-prime diagram shown in Figure~\ref{fig:hiddenconway}(d). Since this is the unknot we see that both summands must be unknotted. However, note that the summand containing $Y$ is alternating. Since an alternating diagram of the unknot can contain only nugatory crossings \cite{BankwitzNug} and $D$ is a reduced diagram, this implies that $Y$ does not contain any crossings. Since we are assuming that $Z$ and $W$ are rational, this means that $D$  is a diagram of a 2--bridge knot. This is a contradiction, as any Conway sphere in the exterior of a 2--bridge knot divides the knot diagram into two rational tangles~\cite[Theorem~3.5.18]{Kawauchi2012}.
\end{proof}
\begin{figure}[htp]
\centering
\def\svgheight{\columnheight}
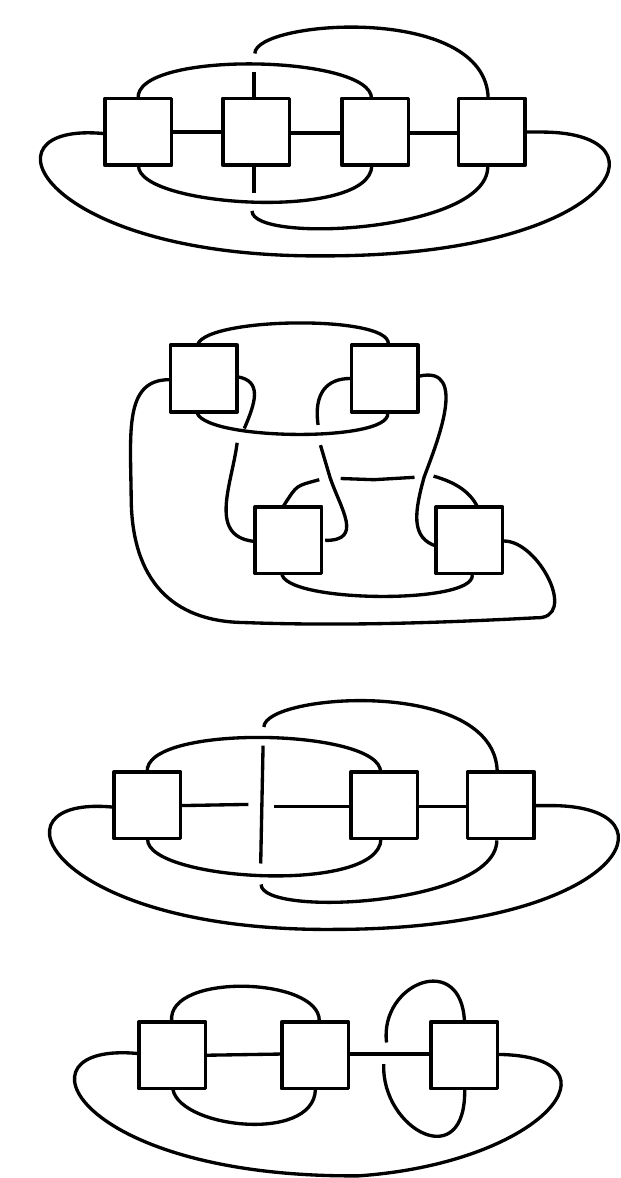

\caption{$(a)$ Shows the standard form of a diagram containing a hidden Conway sphere.
$(b)$ Shows a non-alternating diagram in which the Conway sphere is visible. $(c)$ Shows the diagram when $X$ contains only the unknotting crossing. $(d)$ Shows the non-prime diagram which can be obtained after changing the unknotting crossing in $(c)$.}
\label{fig:hiddenconway}
\end{figure}

\noindent This provides the final step required for our main result on satellite knots in $\D$.
\begin{proof}[Proof of Theorem~\ref{thm:satellitecharacterization}]
$(ii)\Rightarrow (i)$: If $D$ contains a substantial Conway sphere, then Proposition~\ref{prop:substantialimpliessatellite} shows that $K_D$ is a satellite knot.

$(i)\Rightarrow (ii)$: If $K_D$ is a satellite, then Lemma~\ref{lem:satelliteimpliesconway} together with Lemma~\ref{lem:hiddensphereanalysis} imply that $K_D$ contains a substantial Conway sphere.
\end{proof}

\bibliography{Reference}
\bibliographystyle{alpha}

\end{document}